\tikzset{->-/.style={decoration={
  markings,
  mark=at position #1 with {\arrow{>}}},postaction={decorate}}}
\newenvironment{LC}{\noindent\color{green} LC:}{}
\newenvironment{MF}{\noindent\color{blue} \colorbox{blue}{\color{black} MF:}}{}
\newenvironment{ME}{\noindent\color{magenta} ME:} {}
\newcommand{\ra}{\rightarrow}
\DeclareMathOperator{\Aut}{Aut}
\newcommand\Z{\mathbb Z}
\newcommand\N{\mathbb N}
\newcommand\A{\mathbf A}
\newcommand\T{\mathbf T}
\newtheorem{thm}{Theorem}
\newtheorem{prop}[thm]{Proposition}
\newtheorem{lem}[thm]{Lemma}
\newtheorem{cor}[thm]{Corollary}
\newtheorem{conjecture}{Conjecture}[section]
\theoremstyle{definition}
\newtheorem{defn}[thm]{Definition}
\newtheorem{rmk}{Remark}
\newtheorem{question}[conjecture]{Question}
\newtheorem{example}{Example}
\begin{document}

\title{Applications of L systems to group theory}

\author{Laura Ciobanu}
\address{School of Mathematical and Computer Sciences,
 Heriot-Watt University, 
 Edinburgh EH14 4AS,
 Scotland}
\email{l.ciobanu@hw.ac.uk}

\author{Murray Elder}
\address{University of Technology Sydney, Ultimo NSW 2007, Australia}
\email{murrayelder@gmail.com}

\author{Michal Ferov}
\address{University of Technology Sydney, Ultimo NSW 2007, Australia}
\email{michal.ferov@gmail.com}

\begin{abstract}
L systems generalise context-free grammars by incorporating parallel rewriting, and generate languages such as EDT0L and ET0L that are strictly contained in the class of indexed  languages. In this paper we show that many of the languages naturally appearing in group theory, and that were known to be indexed or context-sensitive, are in fact ET0L and in many cases EDT0L. For instance, the language of primitives and bases in the free group on two generators, the Bridson-Gilman normal forms for the fundamental groups of 3-manifolds or orbifolds, and the co-word problem of Grigorchuk's group 
 can be generated by L systems. To complement the result on primitives in rank 2 free groups, we show that the language of primitives, and primitive sets, in free groups of rank higher than two is context-sensitive. We also show the existence of EDT0L and ET0L languages of intermediate growth.
\end{abstract}

\keywords{Free group; primitive; normal form; co-word problem; Grigorchuk group; indexed language; ET0L language; EDT0L language.}

\subjclass[2010]{20F10;  	
	20F65;  	
	68Q42. 	
}

\date{\today}
\maketitle

\section{Introduction}
In this paper we show that many of the context-sensitive or indexed languages arising in problems in group theory and combinatorics are in fact ET0L or even EDT0L. The merit of giving this new formal language characterisation is that ET0L and EDT0L languages are a strict subclass of the indexed ones, and also, the descriptions of these languages are simpler and more algebraic than those based on indexed grammars or nested-stack automata that were given in the literature for the sets considered here.

Both EDT0L and ET0L belong to the languages generated by L systems, which were introduced by Lindenmayer in the late 1960s in order to model the growth of various organisms.
The acronym {ET0L} (respectively {EDT0L}) refers to {\em {\rm E}xtended, {\rm T}able, {\rm 0} interaction, and {\rm L}indenmayer} (respectively D\emph{eterministic}). ET0L and EDT0L languages have only recently featured in group theory; their first prominent appearance was in the work of the first two authors with Volker Diekert, 
in the context of equations in groups. The language of solutions of equations in free groups as tuples of reduced words is EDT0L \cite{CDE2016}, as are solutions in appropriate normal forms in virtually free groups \cite{DE2017} and partially commutative groups \cite{DJK2016}. The result in \cite{CDE2016} implies that the pattern languages studied by Jain, Miasnikov and Stephan are EDT0L as well \cite{MR3050461}.

In Section \ref{sec:prelim} we give the necessary background on formal languages. In Section~\ref{sec:nonET0L} we examine the gap between ET0L and indexed languages,
and in Section \ref{sec:growth} we revisit an example of Grigorchuk and Mach\`i of a language of intermediate growth and show that it is EDT0L. In the remaining sections we show why Lindenmeyer languages are relevant for group theory by presenting several instances where L systems appear naturally. In  Section \ref{sec:primitives} we describe the set of primitives in the free group on two generators as EDT0L,  improving on the context-sensitive characterisation by Silva and Weil \cite{MR2665777}. Furthermore, we show that the set of primitives, and primitive sets, in free groups of rank higher than 2 is context-sensitive. In Section \ref{sec:Grigorchuk} we construct an explicit grammar to prove that the co-word problem in Grigorchuks group is ET0L,
and thus take a different approach to that of Holt and R\"over \cite{MR2274726}, who used nested-stack automata to describe the same set. In Section \ref{sec:BG} we show that the Bridson-Gilman normal forms for the fundamental groups of 3-manifolds or orbifolds, proved in \cite{MR1420509} to be indexed, are in fact ET0L. We conclude the paper with a list of open problems.

\section{ ET0L and EDT0L languages}\label{sec:prelim}

L systems were introduced by Lindenmayer in order to model the growth of various organisms and capture the fact that growth happens in \emph{parallel} everywhere in the organism. Therefore the rewriting system had to incorporate parallelism, as opposed to the sequential behavior of context-free grammars. The difference between sequential and parallel grammars is well illustrated by the following example (page 2 in \cite{RozS86}). 

\begin{example}
Suppose we have an alphabet $A=\{a\}$ and a single rewriting rule $a \longrightarrow a^2$, which is to be applied to $a^3$. If we apply this rule to one $a$ inside $a^3$ at a time, we get the set $\{a^i \mid i \geq 3\}$. If we apply the rule simultaneously to each $a$ in $a^3$ we obtain $a^6$ after one rewrite, and the set of words obtained via parallel rewriting is $\{a^{3 \cdot 2^i} \mid i \geq 0\}.$ 
\end{example}

There is a vast literature on Lindenmayer systems, see for example \cite{RozS86,MR1469992}, with various  acronyms such as D0L, DT0L, ET0L, HDT0L and so forth. 
The following inclusions hold: EDT0L $\subsetneq$ ET0L $\subsetneq$ indexed, and context-free $\subsetneq$ ET0L.
Furthermore, the classes of EDT0L and 
context-free languages are incomparable.

Let $V$ be a finite alphabet. A \emph{table} for $V$ is a finite subset of $V\times V^*$, that can be represented as in Figure~\ref{fig:table}.
\begin{figure}[H]
\centering
\begin{tabular}{ c c c}
 $V $ & &$V^*$ \\
 \hline
 $a$ & $\longrightarrow$ & $a, b, aba$ \\ 
 $b$ & $\longrightarrow$ & $bbabb$ \\  
 $\#$ & $\longrightarrow$ & $\#, b$ \\  
 \\
  
\end{tabular}
\caption{A table for $V=\{a,b,\# \}$.
}\label{fig:table}\end{figure}
If $(c,v)$ is in some table $t$, we say that $(c,v)$ is a \emph{rule} for $c$ and use the convention that if for some $c\in V$ no rule for $c$ is specified in $t$, then $t$ contains the rule $(c,c)$. We express the rewriting corresponding to the rule $(c,v)$ in $t$ as $c\longrightarrow^t v$. 

\begin{defn}[ET0L]
An \emph{ET0L-system} is a tuple $H=(V,A,T,I)$, where 
\begin{enumerate}
\item $V$ is a finite alphabet,
\item $A\subseteq V$ is the subset of \emph{terminal symbols},
\item $T$ is a finite set of \emph{tables} for $V$, that is,
each $t \in T$ is a finite subset 
of $V\times V^*$, and
\item $I\subseteq V^*$ is a finite set of words called \emph{axioms}.
\end{enumerate}

Let $t\in T$. We will write $u \longrightarrow^t v$ to denote that a word $v \in V^*$ can be produced from $u \in V^*$ using the rules in $t$; that is, if $u=c_1\cdots c_m$ and $v=v_1\cdots v_m$, where $c_i \in V$ and $v_i \in V^*$, we write $u\longrightarrow^t v$ to mean $v$ was obtained 
via rules $c_j \longrightarrow^t v_j$ from table $t$, applied to each $c_j$ appearing in $u$.  
More generally, $u \longrightarrow v$ signifies $u\longrightarrow^t v$ for some $t \in T$.
If there exist $u_0,\ldots,u_k \in V^*$ with $u_i\longrightarrow u_{i+1}$ for $0\leq i\leq k-1$,
then we write $u_0 \longrightarrow^* u_k$.
The language \emph{generated by $H$} is defined as
\begin{align*} L(H) & =\{ v\in A^* \mid w\longrightarrow^* v~\text{for some $w\in I$} \}. \end{align*}
A language is ET0L if it is equal to $L(H)$ for some ET0L system $H$.
 \end{defn}

\begin{defn}[EDT0L] \label{EDT0Ldef}
An \emph{EDT0L-system}
 is an ET0L system where in each table there is
 exactly one rule for each letter in $V$. 
 A language is EDT0L if it is equal to $L(H)$ for some EDT0L system $H$.
 \end{defn}

ET0L languages form a full \emph{AFL} (abstract family of languages), that is, they are closed under homomorphisms, inverse homomorphisms, intersection with regular languages, union, concatenation and Kleene closure, while EDT0L are closed under all  of the above except inverse homomorphism so do not form a full AFL \cite{Culik, Asveld}.
  
If for some words $u_1, u_2, u_3$ and tables $t_1, t_2$ we have $u_1 \longrightarrow^{t_1} u_2$ and $u_2 \longrightarrow^{t_2} u_3$ we will write $u_1 \longrightarrow^{t_1 t_2} u_3$ to denote the composition of the rewriting. This can be naturally extended to any finite sequence of rewrites. Given a regular expression $R$ over a finite set $T = \{t_1, \dots t_n\}$, where $t_1, \dots, t_n$ are tables, we will sometimes abuse the notation and use $u_1 \longrightarrow^R u_2$ to denote that there is a word $r$ in the language generated by the regular expression $R$ such that $u_1 \longrightarrow^r u_2$. Furthermore, if the system is deterministic, then every table is in fact a homomorphism on the free monoid $V^*$, and using this more algebraic notation we can give Asveld's equivalent definition for E(D)T0L languages as follows \cite{Asveld}.
\begin{defn}\label{def:et0lasfeld}\label{def:edt0lasfeld}
	Let $A$ be an alphabet and $L\subseteq A^*$. We say that $L$ is an {\em ET0L} language if there is an alphabet $C$ with $A\subseteq C$, a set $H$ of tables (i.e. finite subsets of $C \times C^*$), a regular language $\mathcal{R} \subseteq H^*$ and a letter $c\in C$ such that
\begin{displaymath}
	L = \{ w \in A^* \mid c \longrightarrow^r w \mbox{ for some } r \in \mathcal{R}\}.
\end{displaymath}
In the case when every table $h \in H$ is deterministic, i.e. each $h \in H$ is in fact a homomorphism, we write $r(c) = w$ and say that $L$ is EDT0L.

The set $\mathcal{R}$ is called the {\em regular (or rational) control}, the symbol $c$ the \emph{start symbol} and $C$ the {\em extended alphabet}.
\end{defn}

\noindent\textbf{Convention}. In any description of rational control in this paper, the maps are always applied left to right, but in algebraic settings where $f$ and $g$ are morphisms $fg(a):=f(g(a))$. 

\section{Non-ET0L languages}\label{sec:nonET0L} 
For completeness we present in this section a short survey of examples of languages which are not ET0L. The first examples turn out not to be indexed either. 

Let $\varphi \colon \mathbb{N}^+ \to \mathbb{N}^+$ be such that $\lim_{n \to \infty} \varphi(n) = \infty$ and let $U$ be an arbitrary infinite subset of $\mathbb{N}$. Set $K(\varphi, U) = \{(ba^{\varphi(k)})^{k} \mid k \in U\}$. This construction gives us an infinite family of languages. It was proved in \cite[Theorem 2]{notET0L} that $K(\varphi, U)$ is not ET0L regardless of the choice of $\varphi$ and $U$. We show in Lemma \ref{not_indexed} that these languages are not indexed.

\begin{lem}\cite[Theorem A]{shrinking} (Shrinking lemma)\label{lem:shrink}
	Let $L$ be an indexed language over a finite alphabet $\Sigma$ and let $m > 0$ be a given integer. There is a constant $k > 0$ such that each word $w \in L$ with $|w| \geq k$ can be factorised as a product $w = w_1 \dots w_r$ such that the following conditions hold:
	\begin{itemize}
		\item[(i)] $m < r \leq k$,
		\item[(ii)] $w_i \neq \epsilon$ for every $i \in \{1, \dots, r\}$,
		\item[(iii)] each choice of $m$ factors is included in a proper subproduct which lies in $L$.
	\end{itemize}
\end{lem}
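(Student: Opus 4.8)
The plan is to fix an indexed grammar $G=(N,T,F,P,S)$ in a convenient normal form generating $L$, and to read the desired factorisation of a long word $w\in L$ off one of its derivation trees. In such a tree each internal node carries a nonterminal together with an \emph{index stack} (a word over the index alphabet $F$), and the yield of the subtree rooted at a node is a contiguous factor of $w$. First I would bound the branching of the tree by a grammar constant, so that a word of length at least $k$ is forced to have a root-to-leaf path whose length grows with $k$. This is the source of the threshold $k$: it is chosen large enough that any $w$ with $|w|\ge k$ has a derivation tree containing a path long enough to supply the cuts described below, while capping the number of cuts actually used keeps the factor count $r\le k$.

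The engine of the argument is the standard cut-and-rejoin operation on derivation trees. If a path contains two nodes $u,v$ with $u$ a proper ancestor of $v$ that carry the \emph{same nonterminal and the same index stack}, then replacing the subtree at $u$ by the subtree at $v$ yields a new valid derivation, and hence a new word in $L$, in which the nonempty factor of $w$ produced strictly between $u$ and $v$ has been excised. So the plan is to locate several such matched pairs along a long path and let their boundary positions define the factorisation $w=w_1\cdots w_r$; taking the normal form to have no $\epsilon$-productions (or absorbing empty pieces into their neighbours) secures condition (ii).

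To obtain the delicate clause (iii) — that every choice of $m$ factors survives inside some proper subproduct lying in $L$ — I would arrange, by a pigeonhole argument along a sufficiently long path, for more than $m$ matched pairs whose excised factors are pairwise disjoint. Given any $m$ prescribed factors, at least one of these $m+1$ excisable segments is disjoint from all of them; performing that single excision deletes something nonempty, keeps the $m$ chosen factors, and returns a word of $L$, which is the required proper subproduct. Making $k$ large enough to force more than $m$ usable cuts also gives $r>m$, completing (i).

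The main obstacle, and the feature that distinguishes this from the elementary context-free case, is the index stacks. For a context-free grammar a matched pair need only share a nonterminal, so the finiteness of $N$ forces repeats by plain pigeonhole; here a usable pair must agree on its \emph{entire} stack, and stacks can be unboundedly deep along a single path, so a finite label set no longer guarantees a repeat. I expect the real work to be a combinatorial analysis of the stack-height profile along a path — reading the pushes and pops as a Dyck-type word and applying a Ramsey-style argument to its returns to a fixed level — in order to force enough nodes that share a complete (nonterminal, stack) configuration while keeping their excised factors disjoint. This is essentially the content of \cite[Theorem~A]{shrinking}, and it is the step I would have to import, or reprove with care.
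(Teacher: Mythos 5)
First, a point of calibration: the paper does not prove this lemma at all --- it is imported verbatim as Theorem A of \cite{shrinking} and used as a black box in Lemma~\ref{not_indexed}. So there is no in-paper argument to compare yours against; the only question is whether your sketch would stand on its own.

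It would not, and you have in fact identified the gap yourself. Your combinatorial superstructure is sound: if one can place $m+1$ matched pairs $(u_i,v_i)$ on pairwise disjoint segments of a single root-to-leaf path, each pair carrying the same nonterminal \emph{and} the same full index stack and excising a nonempty region (a prefix piece $x_i$ and a suffix piece $y_i$ of the subtree's yield, not a single middle block as you write), then letting these boundaries define the factorisation gives $m<r\leq k$, and for any $m$ prescribed factors at least one of the $m+1$ pairs is untouched, so grafting that pair produces the required proper subproduct in $L$. The problem is that producing such pairs is not a refinement of the argument but its entire content, and the route you propose for it --- pigeonhole or Ramsey on returns of the stack-height profile to a fixed level --- fails outright in the basic hard case: on a path along which indices are only ever pushed, the height profile is strictly increasing, there are no returns to any level, every node on the path carries a distinct stack, and no two nodes share a configuration. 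This is exactly why pumping and shrinking for indexed languages do not follow from the context-free argument, and Gilman's proof has to use a genuinely different device (roughly, analysing how pushed indices are distributed and consumed across the whole derivation tree rather than along one path). Since you explicitly propose to ``import'' this step from Theorem A of \cite{shrinking}, which is the very statement being proved, the proposal as written is circular: it is an accurate reverse-engineering of how repeated configurations would be \emph{used} to shrink a word, not a proof that they exist.
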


\begin{lem}
	\label{not_indexed}
	The language $K(\varphi, U)$ is not indexed for any choice of $\varphi$ and $U$.
\end{lem}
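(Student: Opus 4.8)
The plan is to assume $K = K(\varphi, U)$ is indexed and derive a contradiction from the Shrinking Lemma (Lemma~\ref{lem:shrink}). The guiding observation is that every word of $K$ is the \emph{unique} word $(ba^{\varphi(k)})^{k}$ having a prescribed number $k$ of equal-length $a$-blocks, so membership is extremely rigid: the number of letters $b$ equals the number of maximal $a$-blocks, which equals $k$, and all these blocks have the common length $\varphi(k)$. A proper subproduct deletes a nonempty factor and is therefore strictly shorter, so if it lies in $K$ it must be $(ba^{\varphi(k')})^{k'}$ with $k' < k$.

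First I would record two structural facts. (a) Since $\lim_{n\to\infty}\varphi(n) = \infty$, each fibre $\varphi^{-1}(v)$ is finite; hence $\varphi(U)$ is infinite and, for every attained value, $U$ contains a smallest element of the corresponding fibre. Consequently there are infinitely many $n \in U$ that are \emph{$\varphi$-minimal in $U$}, meaning that no $k' \in U$ with $k' < n$ satisfies $\varphi(k') = \varphi(n)$. (b) If a contiguous subword of $(ba^{\varphi(n)})^{n}$ contains at least two letters $b$, then between two consecutive of them sits a complete $a$-block of length exactly $\varphi(n)$.

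Next I apply Lemma~\ref{lem:shrink} with $m = 1$ to obtain its constant, which I rename $N$, and I choose a $\varphi$-minimal $n \in U$ large enough that $n > N$ and $|w| \geq N$ for $w = (ba^{\varphi(n)})^{n}$. Factorise $w = w_1 \cdots w_r$ with $1 < r \leq N$. Because $w$ has $n > N \geq r$ letters $b$ distributed over only $r$ factors, pigeonhole produces a factor $w_j$ containing at least two $b$'s, hence, by (b), a complete $a$-block of length $\varphi(n)$ flanked by two of its own $b$'s. Taking the single factor $w_j$ as the chosen subset, condition (iii) of Lemma~\ref{lem:shrink} yields a proper subproduct $S = \prod_{i \in J} w_i \in K$ with $j \in J$. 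Since $w_j$ occurs as a contiguous chunk of $S$, that full $a$-block survives as a maximal $a$-block of $S$ of length $\varphi(n)$; writing $S = (ba^{\varphi(k')})^{k'}$ this forces $\varphi(k') = \varphi(n)$. Finally $S$ is proper, so $|S| < |w|$, and as $\varphi(k') = \varphi(n)$ the two lengths are $k'(\varphi(n)+1)$ and $n(\varphi(n)+1)$, giving $k' < n$. Thus $k' \in U$, $k' < n$ and $\varphi(k') = \varphi(n)$, contradicting the $\varphi$-minimality of $n$.

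The step I expect to be the main obstacle is exactly the one that an arbitrary $\varphi$ makes delicate: pinning the block length of $S$ to $\varphi(n)$ only gives $\varphi(k') = \varphi(n)$, which does \emph{not} by itself force $k' = n$ when $\varphi$ is not injective, leaving open the escape of deleting whole blocks. The device that rescues the argument is fact (a), the finiteness of the fibres of $\varphi$, which lets me pre-select $n$ so that $\varphi(k') = \varphi(n)$ has no solution $k' \in U$ below $n$. Everything else — the pigeonhole for a heavy factor, the survival of one clean $a$-block, and the length comparison — is routine once one sees that $m = 1$ already suffices.
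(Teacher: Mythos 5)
Your proof is correct and follows essentially the same route as the paper's: apply the Shrinking Lemma with $m=1$ to a word $(ba^{\varphi(n)})^n$ whose index $n$ is chosen minimal in its $\varphi$-fibre (possible since $\varphi\to\infty$ forces finite fibres), use pigeonhole to find a factor containing a full $a$-block flanked by two $b$'s, and derive $\varphi(k')=\varphi(n)$ with $k'<n$ for the proper subproduct, contradicting minimality. Your write-up is slightly more careful than the paper's in isolating why the minimal fibre elements exist and why the surviving $a$-block pins down the block length, but the argument is the same.
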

\begin{proof} The proof follows that of \cite[Corollary 4]{shrinking}.
	Assume that $K(\varphi, U)$ is indexed and fix $m = 1$. Let $k \in \mathbb{N}$ be given by  Lemma~\ref{lem:shrink}.
	
	Pick $k' \in U$ such that $k' > k$ and $k' = \min( \varphi^{-1}(\varphi(k')))$. Note that such $k'$ exists as $\lim_{n \to \infty}\varphi(n) = +\infty$. Then $w = (ba^{\varphi(k')})^{k'}$ can be factorised as $w = w_1 \dots w_r$, where $1 < r \leq k < k'$. As $r < |w|_b = k'$, by the pigeonhole principle we see that there is some $1 \leq i \leq r$ such that $|w_i|_b \geq 2$, i.e. $w_i$ contains $ba^{\varphi(k')}b$ as a subword. By Lemma~\ref{lem:shrink}, $w_i$ can be included in a proper subproduct $v$ which lies in $K(\varphi, U)$. However, $v$ contains $ba^{\varphi(k')} b$, so $v = (ba^{\varphi(k'')})^{k''}$ for some $k'' \in \mathbb{N}$ such that $\varphi(k') = \varphi(k'')$. As $v$ is a proper subproduct of $w$ we see that $k'' < k'$. However, this is a contradiction  with $k' = \min( \varphi^{-1}(\varphi(k')))$, thus  $K(\varphi, U)$ is not indexed. 
\end{proof}
Note that when $\varphi$ is the identity function and $U = \mathbb{N}$ this was already established in \cite[Theorem 5.3]{MR0416123} and later in \cite[Corollary 4]{shrinking}.

An explicit example of an indexed language which is not ET0L was given in \cite{MR554283} as the language of tree-cuts. Informally speaking, a tree-cut is a sequence of binary strings encoding the list of leaves of a proper rooted binary tree, i.e. a tree in which every vertex has exactly zero or two children. Formally, tree cuts can be defined in a recursive manner:
	\begin{enumerate}
		\item[(i)] the sequence containing only the empty string, i.e. $(\epsilon)$, is a tree-cut;
		\item[(ii)] suppose that $u_1, \dots, u_k, v_1, \dots, v_l \in \{0,1\}^*$ such that the sequences $(u_1, \dots, u_k)$ and $(v_1, \dots, v_l)$ are tree-cuts, then the sequence $$(0u_1, \dots, 0u_k,1v_1, \dots, 1v_l)$$ is a tree-cut.
	\end{enumerate}
	A sequence $(v_1, \dots, v_k)$, where $v_1, \dots, v_k \in \{0,1\}$ is a tree-cut if and only if it can be obtained by repeatedly applying to the above mentioned rules. For example, the sequence $(000, 001, 01, 10, 11)$ is a tree-cut, but the sequence $(000,111)$ is not. For more detail on tree-cuts see for example \cite[Section 3]{MR554283}.
	
Let $a,b$ denote symbols distinct from $0$ and $1$. The {\em language of cuts} is then defined as
\begin{displaymath}
	L_0 = \left\{a v_1 0 bv_1 1 \dots a v_k 0 b v_k 1 \mid (v_1, \dots, v_k) \mbox{ is a cut} \right\} \subseteq \{0,1,a,b\}^*.
\end{displaymath}
Carefully checking the proof of \cite[Lemma 3.3]{MR554283} one can verify that the language $L_0$ is accepted by a nested stack automaton and hence $L_0$ is indexed. It is then proved in \cite[Lemma 3.4]{MR554283} that $L_0$ is not ET0L.

Another example of an indexed language that is not ET0L is given in \cite[Corollary 2]{ehrenfeucht1976relationship}. In fact, the paper suggests an infinite family of such languages. However, the statement of \cite[Theorem 3]{ehrenfeucht1976relationship} contains a misprint, so we give the correct statement here. For a word $w \in \Sigma^*$ let $\overline{w}$ denote its mirror image, i.e. if $w = x_1 \dots x_n$, where $x_1, \dots, x_n \in \Sigma$, then $\overline{w} = x_n \dots x_1$.
\begin{thm}(see \cite[Theorem 3]{ehrenfeucht1976relationship})
	Let $\Sigma$ be a finite alphabet and let $\Sigma'$ be a copy of $\Sigma$ distinct from $\Sigma$. Let $h \colon \Sigma^* \to \Sigma'^*$ be a homomorphism defined by $h(x) = x'$ for every $x \in \Sigma$.
	
	Let $K$ be a context-free language over $\Sigma$ such that $K$ is not EDT0L. Then the language \begin{displaymath}
	M_K = \{k \overline{h(k)} \mid k \in K\} \subseteq (\Sigma \cup \Sigma')^*
\end{displaymath}
is indexed but not ET0L.
\end{thm}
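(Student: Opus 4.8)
The statement has two independent parts, that $M_K$ is indexed and that it is not ET0L, and I would treat them separately; the hypothesis that $K$ is not EDT0L does all its work in the second part.

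For the \emph{indexed} part I would exhibit a nested stack automaton, exactly as is done for $L_0$ above. The key feature to exploit is that the two halves of a word $k\overline{h(k)}$ are locked together by a last-in-first-out discipline: while scanning the $\Sigma$-prefix $w=x_1\cdots x_n$ the automaton pushes $x_1,\dots,x_n$ onto its main stack, and on reaching the first $\Sigma'$-letter it verifies the suffix by popping, checking that the $i$-th suffix letter equals $h(x_{n-i+1})$. Since the stack is last-in-first-out, this checks precisely that the suffix is $\overline{h(w)}$. To verify in addition that $w\in K$ I would use the nesting feature: since $K$ is context-free so is its reversal $K^{R}$, and a pushdown automaton for $K^{R}$ can be simulated inside a nested substack while the main-stack pointer scans the stored word from the top downwards (thereby reading $w^{R}$). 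Crucially this second check consumes a \emph{nested} substack rather than a second independent stack, which is what keeps the device within the indexed class. I would also remark that one cannot simply write $M_K$ as the intersection of the two context-free languages $\{w\overline{h(w)}\mid w\in\Sigma^*\}$ and $K\Sigma'^*$, since the intersection of two context-free languages need not be indexed; it is the special mirror structure, and not a generic closure property, that must be used.

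For the \emph{not ET0L} part I would prove the contrapositive: if $M_K$ is ET0L then $K$ is EDT0L, which contradicts the hypothesis. It is worth stressing why only determinism matters here. Since $K$ is context-free it is automatically ET0L (context-free $\subsetneq$ ET0L), and applying to $M_K$ the homomorphism $\pi$ that erases each primed letter and fixes $\Sigma$ gives $\pi(M_K)=K$; as ET0L is closed under homomorphism this recovers that $K$ is ET0L, which is no contradiction at all. So the entire content is to obtain $K$ by a \emph{deterministic} system. Concretely I would start from an ET0L system for $M_K$ in Asveld's rational-control form, with start symbol, a finite set of (possibly nondeterministic) tables, a regular control, and terminal alphabet $\Sigma\cup\Sigma'$.

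The heart of the matter, and the step I expect to be the main obstacle, is eliminating the within-table nondeterminism. The idea is to exploit that in every terminal word $w\overline{h(w)}$ the suffix is a letter-by-letter mirror image of the prefix, so in any successful derivation the subderivation producing the $\Sigma'$-suffix is forced to be the mirror image, under the coding $h$, of the one producing the $\Sigma$-prefix. Hence the independent choices that an ET0L table may make at distinct occurrences are in fact synchronised across the two halves: a choice made while generating the prefix is witnessed, reversed and primed, in the suffix. I would use this correspondence to refine the regular control so that it records and resolves these choices, converting the nondeterministic tables into homomorphisms without changing the generated language, and then compose with $\pi$ to obtain an EDT0L system for $K$. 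Proving that this determinisation does not shrink the language, that is, that every word of $K$ still survives, is the combinatorial core of the argument; it is precisely the mechanism of Theorem 3 of \cite{ehrenfeucht1976relationship}, whereas the surrounding steps are formal manipulations with the AFL closure properties of ET0L and EDT0L.
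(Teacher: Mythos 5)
The paper does not actually prove this statement: it is quoted verbatim (with a misprint corrected) from Ehrenfeucht and Rozenberg, so there is no in-paper argument to compare yours against, and your proposal has to stand on its own. Your decomposition is the right one, and your observation that erasing the primed letters only recovers that $K$ is ET0L (hence yields no contradiction, so determinism is the whole point) is correct and worth keeping. But the step you yourself identify as the heart of the matter is asserted rather than proved, and the assertion is false as stated. An ET0L system generating $M_K$ is under no obligation to produce the $\Sigma$-prefix and the $\Sigma'$-suffix by parallel, mirror-symmetric subderivations: a single occurrence of a nonterminal may expand to a factor straddling the midpoint, the two halves may be built at entirely different stages of the derivation, and intermediate sentential forms need not respect the $\Sigma/\Sigma'$ split at all. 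Extracting a synchronisation of nondeterministic choices from the mere fact that the \emph{terminal} words happen to have the form $k\overline{h(k)}$ is precisely the content of the Ehrenfeucht--Rozenberg theorem, and your write-up ends by deferring exactly that step to ``the mechanism of Theorem 3'' of the paper being cited. So the combinatorial core of the ``not ET0L'' half is named but not supplied.

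There is also a concrete flaw in your construction for the indexed half. In Aho's nested stack automaton an embedded stack is created at the current position of the stack head and must be emptied and destroyed before the head may move again within the enclosing stack; you therefore cannot maintain a \emph{persistent} nested substack serving as the work stack of a pushdown automaton for $K^R$ while the main head advances one cell of the stored word per simulated input symbol --- each advance would force you to destroy the unbounded substack you are trying to preserve. The conclusion that $M_K$ is indexed is true, but the device as described does not conform to the model; the check that $k\in K$ has to be organised differently, for instance via an indexed grammar exploiting the nested factorisation $k\overline{h(k)}=y\,\bigl(z\overline{h(z)}\bigr)\,\overline{h(y)}$ for $k=yz$, rather than by running a second pushdown inside a frozen walk of the main stack.
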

It is a well known fact (see \cite[Theorem 9]{ehrenfeucht1974some}) that if $K$ is the Dyck language on at least 8 letters, then $K$ is context-free but not EDT0L, hence $M_K$ is indexed but not ET0L.
In fact we observe below in Proposition~\ref{prop:free2notEdt0l} that the word problem for free groups of rank at least 2 is not EDT0L.

\section{Growth of languages} \label{sec:growth}

The growth of a language $L\subseteq \Sigma^*$ is the function $f:\N\ra\N$ such that $f(n)$ is the number of words in $L$ of length $n$.
If $\Sigma$ is finite then the growth function is at most exponential. A language has {\em intermediate growth} if  for any $\alpha, \beta > 1$ there is an integer $N \in \mathbb{N}$ such that for all $n > N$ we have $n^\alpha < f(n) < \beta^n$.
Bridson and Gilman showed that there are no context-free languages of intermediate growth \cite{BGilmanCF}, whereas Grigorchuk and Mach\`i  \cite[Theorem 1]{MR1678812} give the following example of a language of intermediate growth which is recognisable by a one-way deterministic non-erasing stack automaton (1DNESE), so is indexed.
They define the set $$A:=\{ab^{i_1}ab^{i_2} \dots ab^{i_k} \mid 0\leq i_1 \leq \dots \leq i_k, k \in \N\}$$ over the alphabet $\{a,b\}$.

\begin{prop}\label{prop:MG}
The language $A$ is EDT0L. \end{prop}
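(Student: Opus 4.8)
The plan is to exhibit an explicit EDT0L system for $A$ in the algebraic form of Definition~\ref{def:edt0lasfeld}: a single start symbol, finitely many homomorphisms (deterministic tables), and a regular control word specifying in which order to apply them (left to right, following the Convention above). The guiding observation is that a word $ab^{i_1}\cdots ab^{i_k}$ with $0\le i_1\le\cdots\le i_k$ is precisely what one obtains by creating its $k$ blocks one at a time, always inserting a new empty block at the \emph{left} end, and in between insertions feeding one $b$ into \emph{every} block currently present, all in parallel. A block created earlier ends up further to the right and has undergone at least as many growth rounds as any block to its left, so the heights come out nondecreasing from left to right \emph{automatically}; conversely the successive differences $i_j-i_{j-1}\ge 0$ record how many growth rounds occur between two consecutive insertions, so every admissible height sequence is reachable.

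Concretely I would take the extended alphabet $C=\{a,b,S,B\}$ with terminals $\{a,b\}$ and start symbol $S$, where $S$ marks the left end and $B$ marks the left end of a block. I use three homomorphisms of $C^*$: the table $\sigma$ fixes $a,b,B$ and sends $S\mapsto SB$ (spawn a new empty block at the left); the table $\gamma$ fixes $a,b,S$ and sends $B\mapsto Bb$ (grow every block by one $b$); and the table $\tau$ fixes $a,b$, sends $S\mapsto\varepsilon$ and $B\mapsto a$ (terminate, turning each $Bb^t$ into $ab^t$ and deleting the end marker). The regular control is $\mathcal R=\{\sigma,\gamma\}^*\tau$ (intersected with ``at least one $\sigma$'' if the empty word is to be excluded, according to whether $0\in\N$). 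Since every table is a homomorphism and $\mathcal R$ is regular, any language produced this way is EDT0L, so it remains only to prove that this system generates exactly $A$.

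For soundness I would establish the invariant that, before $\tau$ is applied, every word reachable from $S$ has the form $S\,Bb^{t_1}Bb^{t_2}\cdots Bb^{t_k}$ with $0\le t_1\le\cdots\le t_k$: the base case is $S$, the table $\sigma$ prepends a height-$0$ block (preserving monotonicity since $0\le t_1$), and $\gamma$ increases every $t_j$ by one (preserving the order); applying $\tau$ then yields $ab^{t_1}\cdots ab^{t_k}\in A$. For completeness, given $0\le i_1\le\cdots\le i_k$ I would use the control word $\sigma\gamma^{a_k}\sigma\gamma^{a_{k-1}}\cdots\sigma\gamma^{a_1}\tau\in\mathcal R$ with $a_j=i_j-i_{j-1}\ge 0$ and $i_0=0$, which spawns the blocks oldest-first so that the block occupying position $j$ from the left receives exactly $a_1+\cdots+a_j=i_j$ copies of $b$. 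I do not expect a serious technical obstacle in the verification; the real content is the conceptual choice of the spawn-at-the-left, grow-in-parallel mechanism, which makes a block's age monotone with its position and hence forces the heights to be nondecreasing, together with the routine bookkeeping confirming the two inclusions and that arbitrary nondecreasing sequences, and only those, are produced.
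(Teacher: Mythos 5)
Your proposal is correct and is essentially the same construction as the paper's: the paper uses an end marker $q$ with $h_a(q)=qaq'$ to spawn a new block at the left, $h_b$ to append a $b$ to every block in parallel, and a final erasing map, with control $\{h_a,h_b\}^*h_ah_{\$}$ --- exactly your spawn-at-the-left, grow-in-parallel mechanism with only cosmetic differences in how the markers and the leading $a$'s are handled. No further comparison is needed.
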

\begin{proof}
We show that $A$ is generated by an EDT0L system as in Definition \ref{def:edt0lasfeld}, with the following data: the extended alphabet is $\{a,b,q,q'\}$, the start symbol $q$, and the maps are $h_a$, $h_b$, $h_{\$}$ with $h_a(q)=qaq'$, $h_b(q)=qb$, $h_b(q')=q'b$, $h_{\$}(q)=h_{\$}(q')=\epsilon$. 

The rational control is given by $\mathcal{R}=\{h_a,h_b\}^*h_ah_{\$}$.

Let $g \in \{h_a,h_b\}^*$. We can easily prove by induction on the length of $g$ as a word over $\{h_a,h_b\}$ that $g(q)=qb^{i_1}aq'b^{i_2}a \dots aq'b^{i_k}$ where $0\leq i_1 \leq \dots \leq i_k$, i.e the word starts with $q$, and then only $q'$ appears. Then applying $h_a h_{\$}$ to $g(q)$ produces a word which starts with $a$ and contains no more $q, q'$.
\end{proof}
  \begin{cor}There exist EDT0L (and ET0L) languages of intermediate growth.
  \end{cor}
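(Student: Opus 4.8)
The plan is to combine the previous proposition with an independent growth estimate. By Proposition~\ref{prop:MG}, the language $A=\{ab^{i_1}\cdots ab^{i_k}\mid 0\leq i_1\leq\cdots\leq i_k,\ k\in\N\}$ is EDT0L; since every EDT0L language is ET0L, it suffices to show that $A$ has intermediate growth in the sense defined just above, i.e. that its growth function $f$ satisfies $n^\alpha<f(n)<\beta^n$ eventually for every $\alpha,\beta>1$. The corollary then follows immediately, giving an example of the required kind in both classes.

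First I would set up the counting. A word of $A$ of length $n$ is determined by a choice of $k\geq 0$ together with a nondecreasing sequence $0\leq i_1\leq\cdots\leq i_k$ of exponents, and the length constraint reads $k+\sum_{j=1}^k i_j=n$ (the $k$ counts the occurrences of $a$). Writing $m=n-k$ for the total number of $b$'s, the number of admissible exponent sequences for a fixed $k$ is exactly the number of partitions of $m$ into at most $k$ parts. Summing over $k$ from $0$ to $n$, I would express $f(n)$ in terms of partition numbers and observe that $f(n)$ is squeezed between $p(n)$-type quantities, where $p$ is the integer partition function. Concretely, $f(n)$ is at least the number of partitions of a fixed integer comparable to $n$ (taking $k$ large enough that ``at most $k$ parts'' is no constraint), and at most $\sum_{m=0}^n p(m)$, which is bounded by $(n+1)p(n)$.

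The lower bound $n^\alpha<f(n)$ for large $n$ then follows from the fact that the partition function grows faster than any polynomial: by the Hardy--Ramanujan asymptotic $p(n)\sim \frac{1}{4n\sqrt3}\exp\!\bigl(\pi\sqrt{2n/3}\bigr)$, or even just the elementary bound $p(n)\geq 2^{\lfloor\sqrt n\rfloor}$, we get superpolynomial growth, so $f(n)>n^\alpha$ eventually for every fixed $\alpha$. The upper bound $f(n)<\beta^n$ for large $n$ follows because subexponential growth of $p$ (indeed $\log p(n)=O(\sqrt n)$) forces $\sum_{m\leq n}p(m)$ to be subexponential, hence smaller than any $\beta^n$ eventually.

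The main obstacle, such as it is, is purely bookkeeping rather than conceptual: one must translate the normal form of $A$ cleanly into partitions and verify that the squeeze between a single partition number and a polynomial multiple of one is tight enough that both the polynomial lower bound and the exponential upper bound survive. I do not expect any genuine difficulty here, since the subexponential-but-superpolynomial behaviour of $p(n)$ is classical; the only care needed is in matching the length $n$ to the integer being partitioned (accounting for the $k$ symbols $a$) so that the asymptotics of $p$ apply with the correct argument.
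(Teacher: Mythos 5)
Your proposal is correct and follows the paper's route: the corollary is witnessed by the same language $A$, with Proposition~\ref{prop:MG} supplying the EDT0L property. The only difference is that you verify the intermediate growth of $A$ directly via partition-function asymptotics (correctly: $f(n)\geq p(\lfloor n/2\rfloor)$ and $f(n)\leq n\,p(n)$), whereas the paper simply cites Grigorchuk and Mach\`i for that fact.
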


\section{Primitives and bases in free groups}\label{sec:primitives}
Given an alphabet $\Sigma$, we will always assume that a tuple $(w_1, \dots, w_t)$, where $w_1, \dots,  w_n \in \Sigma^*$, is encoded as a string $w_1\#\dots\#w_2$, where the symbol $\#$ is distinct from the ones contained in $\Sigma$. 

A {\em free basis} of a free group is a tuple of elements that freely generate the group, and a {\em primitive (element)} is an element that belongs to some free basis of the group. In this section we show that the set of bases and the set of primitives, written as reduced words over the standard free basis, is EDT0L for free groups of rank $2$, and context-sensitive for higher rank. 
 It is an open question whether the  higher rank  context-sensitive characterisation can be lowered to indexed or E(D)T0L.

\subsection{Bases and primitives in the free group $F_2$ are EDT0L}

In \cite{MR2665777} Silva and Weil showed that the set of primitives in the free group  $F_2$ on generators $\{a,b\}$, written as reduced words over $\{a, b, a^{-1}, b^{-1}\}$, is context-sensitive and not context-free. In this section we show that both the set of bases and the set of primitives are EDT0L. 

\begin{prop}\label{prop:primitives}
The set of free bases and the set of primitives in $F_2$, as reduced words, are EDT0L. 
\end{prop}

\begin{proof}
It is a classical result due to Nielsen that two elements $g,h \in F_2$ form a basis of $F_2$ if and only if the commutator $[g,h]$ is conjugate either to $[a, b]$ or $[b, a]$. Hence the set of bases in $F_2$ can be seen as the values $X, Y$ satisfying the equations $$Z[X,Y]Z^{-1}=[a,b]$$ or $$Z[X,Y]Z^{-1}=[b,a]$$ where $Z$ can take any value.

By Corollary 2.2 in \cite{CDE2016}, for any equation in a free group the set of solutions, or any projections thereof, written as tuples in reduced words, is EDT0L. In this case the values of $(X,Y)$ will provide the set of bases, and the values of $X$ the set of primitives.
\end{proof}

\subsection{Primitives in free groups of higher rank}

While in $F_2$ the set of free bases can be expressed in terms of solutions to equations, this does not hold in higher rank. It was shown in \cite{definable} that the set of primitives in free groups of higher rank is not   definable in the first order theory of the group, and thus we cannot use our previous approach to give an EDT0L characterisation in this case.
Another approach is to produce the set of primitives by applying all the maps in $\Aut(F_k)$ to a free basis element. Since $\Aut(F_k)$ is finitely generated, we would apply all the maps in $\Aut(F_k)$, written as words over the generators of $\Aut(F_k)$ - which can be seen as rational control, to a basis element which would play the role of the axiom. The set thus obtained would be EDT0L and contain all the primitives, but neither as reduced nor unique words. 

Since we are interested in establishing a formal language characterisation for primitives as reduced words, we use a different approach, based on Stallings' folding of labeled graphs, and an algorithm for testing primitivity in a free group given in \cite{CliffordGoldstein}, to show here that they are context-sensitive. For the sake of keeping this note succinct, we do not define here the {\em Stallings graph} or what is meant by folding, but refer the reader to references such as \cite{stallings} and \cite{KapMya_folding}. If $\Gamma$ is the Stallings graph of a finitely generated subgroup of a free group $F$, a \emph{pinch} will signify the identification of two distinct vertices of $\Gamma$. 

A \emph{primitive set} (as opposed to the set of primitives) in a free group $F$ is a set of elements that can be extended to a basis of $F$.

\begin{lem} \cite[Theorem 4.4]{CliffordGoldstein}
	\label{lemma:fold_and_pinch}
	Let $n\leq k$ and let $W = \{w_1, \dots, w_n\}$ be a set of reduced words in $F_k = F(X)$, where $X = \{x_1, \dots, x_k\}$, such that for every $x \in X$ either $x$ or $x^{-1}$ appears in some $w \in W$. Let $\Gamma$ be a graph representing the subgroup of $F_k$ generated by $W$. Then $W$ is a primitive set if and only if there is a sequence of pinches and folds, containing exactly $k-n$ pinches, that transforms $\Gamma$ into the elementary wedge on $X$.
\end{lem}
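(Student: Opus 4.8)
The plan is to reconstruct the Stallings-graph argument of \cite{CliffordGoldstein}. Throughout I view each graph as immersed over the rose $R_k$ with $k$ petals, so a connected folded graph $\Gamma$ represents the subgroup $H\le F_k$ given by the image of $\pi_1(\Gamma)$, and for folded $\Gamma$ the rank of $H$ equals the first Betti number $b_1(\Gamma)=E-V+1$. Two facts drive everything. First, every Stallings fold is a label-preserving, $\pi_1$-surjective quotient, so it \emph{preserves} the represented subgroup $H$ (a fold can only kill a null-homotopic backtrack). Second, a pinch identifying two distinct vertices $u,v$ raises $b_1$ by exactly one and replaces $H$ by $\langle H,g\rangle$, where $g=g_ug_v^{-1}$ is read along the new loop using the addresses of $u$ and $v$; hence a pinch enlarges the represented subgroup by \emph{at most one} generator. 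I would also record at the outset the role of the hypothesis that every $x\in X$ occurs in some $w_i$: neither folds nor pinches ever create a new edge-label, so the set of generators labelling $\Gamma$ can only shrink along the process, and to have any hope of reaching $R_k$ (which carries all $k$ labels) one must start with all $k$ labels present, which is exactly this hypothesis.

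For the sufficiency direction ($\Leftarrow$) the bookkeeping above is decisive. Suppose a sequence of folds and exactly $k-n$ pinches carries $\Gamma$, representing $\langle W\rangle$, to $R_k$, representing $F_k$. Since folds fix the subgroup and each pinch adds at most one generator, $k=\mathrm{rank}(F_k)\le \mathrm{rank}\langle W\rangle+(k-n)$, forcing $\mathrm{rank}\langle W\rangle\ge n$; as $\langle W\rangle$ is generated by the $n$ words $w_i$ we get $\mathrm{rank}\langle W\rangle=n$, so $W$ is a free basis of $\langle W\rangle$. The $n$ elements of $W$ together with the $k-n$ elements introduced by the pinches then generate $F_k$, and a generating set of size $k$ for $F_k$ is automatically a basis (the induced epimorphism $F_k\to F_k$ is an isomorphism by the Hopfian property of $F_k$). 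Thus $W$ is contained in a basis of $F_k$, i.e.\ $W$ is a primitive set.

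For the necessity direction ($\Rightarrow$) I would argue by explicit construction. Extend $W$ to a basis $B=\{w_1,\dots,w_n,v_1,\dots,v_{k-n}\}$ of $F_k$; the subdivided wedge $\Omega$ of the $k$ circles labelled by $B$ folds down to $R_k$, and the sub-wedge on $w_1,\dots,w_n$ folds to $\Gamma$. The task is to manufacture the $k-n$ extra petals from $\Gamma$ using pinches. For each complementary generator $v_j$ in turn I would read $v_j$ as far as possible along a path from the basepoint in the current folded graph and use a single pinch, followed by folds, to close up a loop carrying the class of $v_j$ modulo the subgroup already built; since each such pinch raises the represented subgroup's rank by exactly one, $k-n$ of them lift the rank from $n$ to $k$, after which the fully folded graph is forced to be $R_k$. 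The lower bound ``at least $k-n$ pinches'' is immediate from the rank count of the previous paragraph, so the real content is realising the upper bound with no wasted pinch.

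The step I expect to be the main obstacle is precisely this last construction: showing that the complementary elements $v_j$ can be installed by pinches that are each \emph{effective} (each strictly increasing the represented subgroup), using only vertex-addresses $g_ug_v^{-1}$ actually available in the evolving folded graph. Controlling which elements are reachable as such addresses after interleaved folding, and ruling out any rank-preserving ``wasted'' pinch, is the delicate combinatorial heart of the Stallings-graph analysis in \cite{CliffordGoldstein}, and it is here that the hypothesis that $W$ uses all generators is genuinely exploited.
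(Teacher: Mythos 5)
First, a point of order: the paper does not prove this lemma at all --- it is imported verbatim from \cite{CliffordGoldstein} and used as a black box --- so there is no in-paper proof to compare yours against, and I am assessing your reconstruction on its own terms. Your backward direction ($\Leftarrow$) is complete and correct: folds preserve the represented subgroup, a pinch replaces $H$ by $\langle H, g_u g_v^{-1}\rangle$, so arriving at the rose after exactly $k-n$ pinches exhibits a generating set of $F_k$ of size at most $k$ containing $W$; since $F_k$ cannot be generated by fewer than $k$ elements and is Hopfian, this set is a basis of size exactly $k$, hence $W$ is a primitive set. Your remark that the ``all letters appear'' hypothesis is forced because neither operation creates new edge-labels is also correct and apt.

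The forward direction, however, contains a genuine gap, which you candidly flag but which is not a peripheral technicality --- it is the entire content of the cited theorem. A pinch can only identify two vertices \emph{already present} in the current graph, so the only elements adjoinable at a given stage are of the form $g_u g_v^{-1}$ for addresses of existing vertices; your plan to ``read $v_j$ along a path from the basepoint and close up a loop carrying the class of $v_j$'' presupposes, without proof, that some element generating the same free-factor extension as $v_j$ is realizable as such an address difference. Note also that what must be shown at each stage is that some pinch turns the core graph of a proper free factor of rank $n<k$ (carrying all $k$ labels) into the core graph of a \emph{free factor} of rank $n+1$ --- not merely of a rank-$(n+1)$ subgroup. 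In particular your closing claim that after $k-n$ rank-raising pinches ``the fully folded graph is forced to be $R_k$'' does not follow from the rank count alone, since a rank-$k$ subgroup of $F_k$ need not equal $F_k$ (e.g.\ $\langle x_1^2, x_2\rangle \le F_2$); you need the final subgroup to literally be $F_k$, which is exactly what the unproven ``effective pinch exists'' step must deliver. As it stands your proposal correctly proves the easy half and reduces the hard half to the statement it was meant to establish.
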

In the case that not every generator or inverse appears in the list of $w_i$ then the Lemma \ref{lemma:fold_and_pinch} can still decide primitivity by taking a smaller value for $k$.

A standard way to represent a Stallings graph $\Gamma=(V\Gamma, E\Gamma)$ over an alphabet $X$ would be as a collection of labeled oriented edges, where an edge $\gamma \in E\Gamma$ is given as a triple $(i(\gamma), t(\gamma), l(\gamma))$, with $i(\gamma) \in V\Gamma$ the initial vertex of $\gamma$, $t(\gamma) \in V\Gamma$ the terminal vertex and $l(\gamma) \in X \cup X^{-1}$ the label of $\gamma$. Obviously, a word $w \in F(X)$ can be represented by a graph $\Gamma_w$ such that $|V\Gamma_w|, |E\Gamma_w| \leq n$, where $n = |w|$. However, as there might be up to $n$ vertices, and one needs $\log(n)$ space to encode the names of the vertices, the na\"ive implementation of $\Gamma$ would require $n \log(n)$ space.

We propose a different representation of the Stallings graph, along with two additional types of moves, and show that Lemma \ref{lemma:fold_and_pinch} can be realised by a linear space algorithm.
Instead of labelling an edge by a single symbol, we will label edges by subwords, that is, an edge $\gamma \in E\Gamma$ will be represented by a triple $(i(\gamma), t(\gamma), s(\gamma))$, where $i(\gamma)$ and $t(\gamma)$ are as before and $s(\gamma) \in F(X)$ is a reduced word. In this case we say that $\Gamma$ is a \emph{segment graph}. Adapting the terminology of Stallings graphs, we say that a segment graph $\Gamma$ is \emph{folded} if it does not contain a pair of incident edges whose labels share a common prefix or suffix. If a vertex $v$ is incident to two edges with a common prefix or suffix, then we say that $v$ is \emph{unfolded}, and otherwise call it {\em folded}.

Folding at a vertex $v$ applies to segment graphs in three possible ways, depending on whether two segments incident to $v$ have exactly the same label, or only common proper prefixes. More precisely, let $\gamma_1, \gamma_2 \in E\Gamma$ satisfy $i(\gamma_1) = v = i(\gamma_2)$, $s(\gamma_1) = u w_1$ and $s(\gamma_2) = u w_2$ for some $u, w_1, w_2 \in F(X)$. 
	\begin{itemize}
	\item[I.] If $w_1=w_2=\epsilon$, then this is a usual folding of graphs.
	\item[II.] If $w_1=\epsilon$ and $w_2 \neq \epsilon$, then we split $\gamma_2$ into $\gamma'_2$ and $\gamma''_2$ by introducing a new vertex $v'$ on $\gamma_2$ such that $i(\gamma'_2)=v$, $t(\gamma'_2)=v'$ and $s(\gamma'_2)=u$, and fold $\gamma_1$ with $\gamma'_2$ as in case $I$.
	\item[III.] If $w_1, w_2 \neq \epsilon$, then we introduce a new vertex $v'$ and a new edge $\gamma$, such that $i(\gamma) = v$, $t(\gamma) = v'$, $s(\gamma) = u$, and replace edges $\gamma_1, \gamma_2$ by new edges $\gamma_1', \gamma_2'$ such that $i(\gamma_1') = v' = i(\gamma_2')$, $t(\gamma_1') = t(\gamma_1)$, $t(\gamma_2') = t(\gamma_2)$, $s(\gamma_1') = w_1$ and $s(\gamma_2') = w_2$. 
	\end{itemize}

	Pinching a segment graph is defined as follows. Let $\gamma, \gamma' \in E\Gamma$ satisfy $s(\gamma) = w_1 w_2$ and $s(\gamma') = w_1' w_2'$ for some $w_1, w_2, w_1', w_2' \in F(X)$. We remove the edges $\gamma, \gamma'$, introduce new vertex $v$ and up to four new edges $\gamma_1, \gamma_2, \gamma_1', \gamma_2'$ given by
	\begin{itemize}
		\item[$\gamma_1$:] $i(\gamma_1) = i(\gamma)$, $t(\gamma_1) = v$, $s(\gamma_1) = w_1$;
		\item[$\gamma_2$:] $i(\gamma_2) = v$, $t(\gamma_2) = t(\gamma)$, $s(\gamma_2) = w_2$;
		\item[$\gamma_1'$:] $i(\gamma_1') = i(\gamma')$, $t(\gamma_1') = v$, $s(\gamma_1') = w_1'$;
		\item[$\gamma_2'$:] $i(\gamma_2') = v$, $t(\gamma_2') = t(\gamma')$, $s(\gamma_2) = w_2'$.
	\end{itemize}
	If one of $w_1, w_2, w_1', w_2'$ is empty, i.e. when we are pinching a segment with a vertex, we don't need to introduce the vertex $v$.
	
	Both foldings of type III and pinches can cause the numbers of edges and vertices to increase; the resulting graph could potentially not fit on a linear size tape, but the discussion below shows that this is not the case.

Recall that if $\Gamma$ is a Stallings graph corresponding to some finite set $W$ then the {\em core} of $\Gamma$ (or the \emph{core graph} for $\langle W \rangle$), is the smallest subgraph of $\Gamma$ in which we can read all the elements of $\langle W \rangle$ as reduced words along loops based at the base vertex of $\Gamma$. In particular, if $\Gamma$ is finite then the core of $\Gamma$ does not contain any vertices of degree one, with the exception of the base vertex of $\Gamma$. The core of a segment graph is defined in an analogous manner.

\begin{rmk}
\label{remark:base_vertex_is_irrelevant}		
(1) A set $W = \{w_1, \dots, w_n\} \subseteq F_k$ is primitive if and only if the set $W^g = \{gw_1g^{-1}, \dots, g w_n g^{-1}\}$ is primitive for some (and thus for all) $g \in F_k$. As choosing a different base vertex in the core graph of $\langle W \rangle$ is the same as taking a conjugate of $\langle W \rangle$, we will assume that the base vertex in any segment graph has degree at least $2$. 
In fact, unless the graph is homeomorphic to a loop, we may assume that the base vertex is of degree at least three.

(2) Furthermore, as the core graph will never contain any leaves other than the base vertex and as was already mentioned, we can always assume that the base vertex is of degree at least two, hence without loss of generality we may assume that the segment graph does not contain any leaves; this can be achieved by \textit{pruning}, i.e. repeatedly removing vertices of degree 1 (along with corresponding edges). Note that for finite graphs this procedure always terminates.
\end{rmk}

	We say that a segment graph $\Gamma$ is \emph{topological} if all its vertices have degree $\geq 3$, unless it consists of a single vertex, or a loop attached at a degree two vertex. Following Remark \ref{remark:base_vertex_is_irrelevant}, we see that every segment graph can be transformed into a topological one (representing a subgroup conjugate to the original) by picking a different base vertex, pruning and \textit{merging} edges: suppose $\gamma_1, \gamma_2 \in E\Gamma$ such that $t(\gamma_1) = v = i(\gamma_2)$ and $\deg(v) = 2$; then we can replace the edges $\gamma_1, \gamma_2$ by a new edge $\gamma$ such that $i(\gamma) = i(\gamma_1)$, $t(\gamma) = t(\gamma_2)$ and $s(\gamma) = s(\gamma_1) s(\gamma_2)$, and remove vertex $v$.
		
The following lemma implies that pruning will need to be done at most once in the entire process corresponding to Lemma \ref{lemma:fold_and_pinch}.

\begin{lem}
	\label{lemma:one_pruning}
	Let $\Gamma$ be a folded graph that does not contain any vertex of degree $1$ and let $\Gamma'$ be a graph obtained from $\Gamma$ by performing a pinch and a folding. Then $\Gamma'$ does not contain vertices of degree $1$.
\end{lem}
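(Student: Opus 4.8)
The plan is to show that performing a single pinch followed by a single folding on a degree-$1$-free folded graph cannot create a vertex of degree $1$. The natural approach is a careful case analysis tracking how the degrees of vertices change under each operation, starting from the observation that the only way to create a degree-$1$ vertex is to \emph{lower} the degree of an existing vertex or to introduce a genuinely new leaf.

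First I would analyse the pinch. By definition, a pinch removes edges $\gamma, \gamma'$ (splitting each as $\gamma = w_1 w_2$ and $\gamma' = w_1' w_2'$) and introduces a new vertex $v$ together with up to four new edges, so that the old endpoints $i(\gamma), t(\gamma), i(\gamma'), t(\gamma')$ each retain an incident edge while $v$ acquires degree up to $4$. The key point is that no old vertex loses net degree: each edge removed at an old endpoint is immediately replaced by one of $\gamma_1, \gamma_2, \gamma_1', \gamma_2'$ incident to that same vertex. Hence after the pinch every vertex that previously had degree $\geq 2$ still does, and the new vertex $v$ has degree $\geq 2$ as well (using Remark~\ref{remark:base_vertex_is_irrelevant} and the fact that a pinch identifies two \emph{distinct} points, so at least two edges meet $v$; the degenerate case where some $w_i$ is empty must be checked separately, since then $v$ is not introduced and the pinch is a vertex identification, which only \emph{raises} degrees). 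So the graph after the pinch is again free of degree-$1$ vertices.

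Next I would analyse the folding applied to this intermediate graph, going through the three cases I, II, III of the definition. In case I (ordinary folding, $w_1 = w_2 = \epsilon$) two edges with identical labels emanating from $v$ are identified; this merges their terminal vertices and can only \emph{increase} or preserve degrees, so no leaf is created. In case II we split one edge at a new vertex $v'$ and then fold as in case I; here $v'$ is created with degree $2$ by construction, and the subsequent type-I fold again only raises degrees, so $v'$ and all other vertices stay at degree $\geq 2$. Case III is the delicate one: we introduce $v'$ and a new edge $\gamma = u$, then attach the two tails $w_1, w_2$ at $v'$, giving $v'$ degree exactly $3$; the original endpoint $v$ loses the two edges $\gamma_1, \gamma_2$ but gains the single new edge $\gamma$, so its degree drops by one. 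The crux of the argument is that $v$ cannot have had degree $2$ before the fold (which would make it degree $1$ after): since $\gamma_1, \gamma_2$ are two \emph{distinct} edges at $v$ sharing a common prefix $u$, they contribute two to $\deg(v)$, and because the intermediate graph has no degree-$1$ vertices, $v$ has degree $\geq 2$; I must rule out $\deg(v) = 2$, which I expect follows because a vertex incident only to $\gamma_1, \gamma_2$ with a shared prefix would, after merging the common part, reduce to a vertex of strictly smaller degree only in the degenerate loop case already excluded by topologicality, or else the original hypothesis forces $\deg(v) \geq 3$ after the pinch.

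The main obstacle I anticipate is precisely this last point in case III: controlling $\deg(v)$ for the vertex at which we fold after the pinch. One must verify that the pinch, which may create the degree-$2$ vertex $v$, does not then feed into a type-III fold at that same vertex in a way that drops it to degree $1$. I would handle this by checking that whenever the folded pair $\gamma_1, \gamma_2$ shares a nonempty prefix, the vertex $v$ either is the freshly created pinch vertex (degree $\geq 2$ with both incident edges entering the shared-prefix fold, so the fold replaces two edges by one but $v$ retains at least the rest of its incidences) or is an old vertex of degree $\geq 2$ whose other incident edges are untouched by the fold. Carefully enumerating which edges at $v$ participate in the shared prefix versus which remain fixed should close the gap and complete the verification that $\Gamma'$ is free of degree-$1$ vertices.
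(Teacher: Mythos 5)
Your strategy --- tracking degrees directly through the pinch and through each type of fold --- is genuinely different from the paper's, and your analysis of the pinch itself is fine. But the folding half has a real gap, and it sits exactly where you flag it. Your two proposed ways to close case III both fail: the lemma assumes only that $\Gamma$ is folded and leaf-free, not topological, so you cannot invoke topologicality to force $\deg(v)\geq 3$; and the fallback that $v$ ``retains at least the rest of its incidences'' is vacuous in the dangerous case where $\gamma_1,\gamma_2$ are its only incidences. Two further problems: (a) your claim that a type-I fold ``can only increase or preserve degrees'' is false --- the fold site loses one from its degree, and if the two folded edges are parallel (same terminal vertex) that terminal vertex loses one as well, so a degree-$2$ vertex at the far end of two identically labelled parallel edges would become a leaf; (b) folding is an iterated process --- a type-I fold identifies $t(\gamma_1)$ with $t(\gamma_2)$ and can thereby create a new unfolded vertex far from the pinch, so folds cascade, and your case analysis, which only inspects the pinch vertex, does not control the secondary fold sites where degrees also drop. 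In each of these scenarios what actually saves the lemma is the foldedness of the \emph{original} graph (for instance, two parallel edges with identical labels into $w$ would already violate foldedness at $w$), but your write-up never extracts and uses that invariant.

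The paper's proof packages all of this into one observation: subdivide $\Gamma$ at the pinch points to get $\Gamma_0$ (still folded and leaf-free, because segment labels are reduced words), and note that the pinch together with the \emph{entire} folding process induces a surjective graph morphism $f\colon\Gamma_0\to\Gamma'$. If $v'\in V\Gamma'$ were a leaf with unique incident edge $\gamma$, then every edge at every $v\in f^{-1}(v')$ must map to a path beginning with $\gamma$, so all edges at $v$ share a common prefix; foldedness of $\Gamma_0$ then forces $\deg(v)=1$, contradicting leaf-freeness of $\Gamma_0$. This handles any number of folds and all cascades in one stroke. If you want to keep the direct count, you would need to prove and propagate the invariant that at every intermediate stage a vertex whose incident edges all share a common first letter has a degree-one preimage --- which is the morphism argument in disguise.
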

\begin{proof}
	Suppose that there is $v' \in V\Gamma'$ such that $\deg(v') = 1$. Without loss of generality we may assume that $v'$ is the initial vertex of its unique adjacent edge $\gamma \in E\Gamma'$. Note that the pinch and the subsequent folding uniquely define a surjective morphism of graphs $f \colon V\Gamma_0 \to V\Gamma'$, where $\Gamma$ can be obtained from $\Gamma_0$ by merging. Note that $\Gamma_0$ is folded and it does not contain vertices of degree $1$. Let $v \in f^{-1}(v')$ be arbitrary. Again, without loss of generality we may assume that $v$ is the initial vertex of all of its adjacent edges. It follows that all the edges adjacent to $v$ must share a common prefix with $\gamma$, meaning that $v$ is not folded folded unless $\deg(v) = 1$.  As $\Gamma_0$ is folded, $\deg(v) = 1$ for every $v \in f^{-1}(v')$, which contradicts the fact that $\deg(u) > 1$ for every $u \in V\Gamma_0$. Thus $\Gamma'$ does not contain vertices of degree $1$.
\end{proof}
	
The {\em topological rank} of a graph is the rank of its fundamental group.
		
\begin{lem}\label{lemma:Euler}
Let $\Gamma$ be a finite topological graph of topological rank $r$ with $r > 1$. Then $|E \Gamma|\leq 3r-3$ and $|V \Gamma|\leq 2r-2$.
\end{lem}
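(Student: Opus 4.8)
The plan is to combine the Euler characteristic (first Betti number) identity for connected graphs with the handshake lemma, using the degree condition that is built into the definition of a \emph{topological} graph. The whole argument is essentially a two-line counting estimate, so the only real work is making sure the hypotheses entitle us to the degree bound.

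First I would record the rank–Euler relation. Since $\Gamma$ arises as (the core of) a connected segment graph, it is connected, so its topological rank $r$ — the rank of its fundamental group — coincides with its first Betti number. Hence the Euler characteristic satisfies $\chi(\Gamma) = |V\Gamma| - |E\Gamma| = 1 - r$, equivalently
\[
|E\Gamma| = |V\Gamma| + r - 1 .
\]
Next I would extract the crucial degree bound from the hypothesis $r > 1$. By definition a topological graph has all vertices of degree $\geq 3$ \emph{except} when it is a single vertex (topological rank $0$) or a loop attached at a degree-two vertex (topological rank $1$); since $r > 1$ rules out both exceptional cases, every vertex of $\Gamma$ has degree at least $3$.

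Then I would apply the handshake lemma: summing degrees counts each edge twice, so $2|E\Gamma| = \sum_{v \in V\Gamma} \deg(v) \geq 3\,|V\Gamma|$, which gives $|V\Gamma| \leq \tfrac{2}{3}|E\Gamma|$. Substituting the Euler relation $|E\Gamma| = |V\Gamma| + r - 1$ into this inequality yields $3|V\Gamma| \leq 2\bigl(|V\Gamma| + r - 1\bigr)$, i.e. $|V\Gamma| \leq 2r - 2$. Feeding this back into the Euler relation gives $|E\Gamma| \leq (2r-2) + r - 1 = 3r - 3$, which is exactly the claim.

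I do not expect a genuine obstacle here; the one point that needs care is the justification of the uniform degree bound $\deg(v) \geq 3$, since the definition of topological graph lists two exceptional shapes, and one must verify that the assumption $r > 1$ excludes precisely those (rank $0$ and rank $1$) before invoking the handshake count. A secondary point to state explicitly is connectedness, which is what makes the identification of the topological rank with $|E\Gamma| - |V\Gamma| + 1$ legitimate; this holds because the graphs under consideration are cores of (connected) Stallings/segment graphs.
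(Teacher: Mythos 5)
Your proof is correct and follows exactly the paper's argument: the hypothesis $r>1$ forces $\deg(v)\geq 3$ at every vertex, and combining the handshake identity $2|E\Gamma|=\sum_v \deg(v)\geq 3|V\Gamma|$ with the Euler characteristic relation $r=|E\Gamma|-|V\Gamma|+1$ yields both bounds. You have merely spelled out the substitution steps and the exclusion of the two exceptional shapes more explicitly than the paper does.
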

\begin{proof}
As $r > 1$ we see that $\deg(v) \geq 3$ for every $v \in V\Gamma$. From the Euler characteristic formula $r=|E\Gamma|-|V\Gamma|+1$, and since in any graph $2|E\Gamma|=\sum_{v\in V\Gamma}\deg(v)$ by assumption $2|E\Gamma|\geq 3|V\Gamma|$, which gives the inequalities above.
\end{proof}

\begin{rmk}
\label{remark:graph_observations}
	Folding does not cause the topological rank of a graph to increase, but pinching can increase it by 1.
\end{rmk}

\begin{cor}
	\label{lemma:folding_estimate}
	Let $\Gamma$ be a finite topological segment graph of topological  rank $r$ with a single unfolded vertex, and let $\Gamma'$ be the folded topological graph obtained from $\Gamma$ by folding, merging and pruning. 
	
	If $r = 1$ then $|V\Gamma'| = |E\Gamma'| = 1$, otherwise $|V\Gamma'|\leq 2r - 2$ and $|E\Gamma'|\leq 3r - 3$.
\end{cor}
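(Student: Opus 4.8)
The plan is to reduce everything to the Euler-characteristic estimate of Lemma~\ref{lemma:Euler}, applied to $\Gamma'$ rather than to $\Gamma$, after first controlling how the topological rank behaves under the three operations involved. Writing $r'$ for the topological rank of $\Gamma'$, my first step would be to show $r'\le r$. By Remark~\ref{remark:graph_observations} folding never increases the topological rank, and both merging and pruning leave $|E\Gamma|-|V\Gamma|$ unchanged, since each deletes exactly one vertex together with one edge; they therefore preserve the rank. Composing these observations over the whole process gives $r'\le r$.

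For the case $r\ge 2$ I would then split according to the value of $r'$. If $r'\ge 2$, then since $\Gamma'$ is by hypothesis a finite, folded, topological graph, Lemma~\ref{lemma:Euler} applies and yields $|E\Gamma'|\le 3r'-3$ and $|V\Gamma'|\le 2r'-2$; because $r'\le r$ and the quantities $3t-3$, $2t-2$ are increasing in $t$, these are at most $3r-3$ and $2r-2$ respectively. The two degenerate subcases are handled directly: a topological graph of rank $1$ is a single loop, so $|V\Gamma'|=|E\Gamma'|=1$, while a topological graph of rank $0$ has no leaves and is therefore a single vertex, giving $|V\Gamma'|=1$ and $|E\Gamma'|=0$. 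Since $r\ge 2$ forces $2r-2\ge 2$ and $3r-3\ge 3$, both degenerate outcomes sit comfortably inside the claimed bounds.

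The remaining case $r=1$ is where the only genuine subtlety lies, and I expect it to be the main obstacle, because here the Euler bound is vacuous (it would force $|V\Gamma'|\le 0$) and one must instead produce the stated \emph{equality}. A rank-$1$ topological graph is a single loop carrying a reduced, nonempty segment label $w$, and the point to verify is that folding and pruning cannot destroy the loop, i.e. that $r'=1$ rather than $r'=0$. The idea is that folding at the loop's unique vertex peels off the maximal common prefix/suffix of $w$ into a tail, realising the cyclic reduction of $w$, after which pruning removes that tail and leaves a single loop labelled by the cyclic reduction of $w$. Since $w$ is reduced, its cyclic reduction is still nonempty, so the loop genuinely survives and $\Gamma'$ is a single loop with $|V\Gamma'|=|E\Gamma'|=1$. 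The care needed here is in confirming that this cyclic-reduction description of the fold is accurate and that no intermediate fold in the cascade can collapse the loop entirely; once that is checked, the equality follows.
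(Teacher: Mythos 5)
Your argument is correct and follows essentially the same route as the paper's proof: bound the rank $r'$ of $\Gamma'$ by $r$ via Remark~\ref{remark:graph_observations}, invoke Lemma~\ref{lemma:Euler} when $r' > 1$, and read the small-rank cases off the definition of a topological graph. You are in fact somewhat more thorough than the paper, which does not separately treat the $r' = 0$ case or verify that the loop survives (i.e.\ that $r' = 1$, via cyclic reduction of the nonempty reduced label) when $r = 1$.
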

\begin{proof}
Let $r'$ denote the topological rank of the graph $\Gamma'$; note that $r' \leq r$ by Remark \ref{remark:graph_observations}.

If $r' = 1$ then, following the definition of topological graphs, we see that $\Gamma'$ is a single vertex with an attached loop and $|V\Gamma'| = |E\Gamma'| = 1$.

If $r' > 1$ then the result follows by Lemma \ref{lemma:Euler}.
\end{proof}

\begin{prop}
	Let $F_k$ be a free group of rank $k\geq 2$. Then the primitive sets in $F_k$ can be recognised in linear space. 
\end{prop}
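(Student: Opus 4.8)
The strategy is to show that the algorithm implicit in Lemma~\ref{lemma:fold_and_pinch}, realised via segment graphs rather than the naive edge-by-edge Stallings graph, can be executed by a nondeterministic Turing machine using only linear space; since nondeterministic linear space equals deterministic linear space (by Savitch's theorem) and context-sensitive languages are exactly those recognised in nondeterministic linear space, this establishes that primitive sets are context-sensitive.

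First I would describe the input and the working representation. The input is a tuple $(w_1, \dots, w_n)$ encoded as $w_1 \# \cdots \# w_n$, of total length $N$, and the machine begins by building the segment graph $\Gamma$ representing $\langle w_1, \dots, w_n\rangle$: this requires at most $N$ edges whose labels are subwords of the input, so the cumulative length of all edge labels is $O(N)$, while the vertex set has size $O(N)$ and the incidence data (triples $(i(\gamma), t(\gamma), s(\gamma))$ with vertex names, which cost $\log N$ bits each) also fits in $O(N \log N)$ naively --- but here is where the segment representation pays off. By merging degree-two vertices and pruning leaves (using Remark~\ref{remark:base_vertex_is_irrelevant} to discard leaves and Lemma~\ref{lemma:one_pruning} to guarantee pruning is needed only once), the graph is brought into topological form, and by Lemma~\ref{lemma:Euler} and Corollary~\ref{lemma:folding_estimate} the number of vertices and edges of any folded topological graph of topological rank $r$ is bounded by $2r-2$ and $3r-3$ respectively. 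Since $r \leq k + \text{(number of generators)} = O(1) + n$ and the total label length never exceeds the initial $O(N)$, only a bounded number of vertex names are ever required, so the entire graph description fits in linear space.

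Next I would argue that the sequence of moves --- foldings of types I, II, III, pinches, merges, and the single pruning --- preserves this linear space bound throughout the computation. The key points to verify are: (i) each folding does not increase topological rank (Remark~\ref{remark:graph_observations}), so after every fold we may re-apply merging to return to a topological graph whose size is controlled by Corollary~\ref{lemma:folding_estimate}; (ii) each pinch increases topological rank by at most one, and Lemma~\ref{lemma:fold_and_pinch} requires exactly $k - n$ pinches in total, so the rank --- and hence the graph size --- stays bounded by a constant depending only on $k$ and $n$, never exceeding the ambient linear bound; (iii) the label lengths are only ever split (by type II, III folds and by pinches) or concatenated (by merges), never lengthened beyond the total budget of the original words, because every label remains a subword of a cyclic conjugate of the input data. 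The machine nondeterministically guesses the sequence of pinches and folds prescribed by the lemma, verifies after $k-n$ pinches that the resulting folded graph is the elementary wedge on $X$, and accepts if and only if this holds.

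The main obstacle I anticipate is the careful bookkeeping of label lengths under repeated pinching and type~III folding: unlike folds, which identify structure, a pinch on $s(\gamma) = w_1 w_2$ and $s(\gamma') = w_1' w_2'$ creates up to four new edges and a new vertex, and one must confirm that the sum of all label lengths is non-increasing (or at least stays within the linear budget) across the whole run, rather than just at a single step. This requires an invariant --- something like ``the total length of all edge labels plus the number of edges times $\log N$ is bounded by $cN$ for a fixed constant $c$'' --- that is maintained by every move type; establishing that this invariant survives the interleaving of $O(1)$-many rank-increasing pinches with arbitrarily many rank-non-increasing folds and merges is the delicate combinatorial heart of the argument, and is precisely what the preceding lemmas on topological graphs were assembled to supply.
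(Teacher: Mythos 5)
Your proposal follows essentially the same route as the paper: represent the graph as a segment graph with word-labelled edges, keep it topological by pruning and merging, use Lemma~\ref{lemma:one_pruning}, Lemma~\ref{lemma:Euler} and Corollary~\ref{lemma:folding_estimate} to bound the number of vertices and edges by a constant depending only on $k$ (since the $k-n$ pinches bound the topological rank by $k$), and observe that the total label length stays within the input budget, giving a nondeterministic linear-space algorithm whose correctness is Lemma~\ref{lemma:fold_and_pinch}. Two points need correction. First, your appeal to Savitch's theorem is wrong: Savitch gives $\mathrm{NSPACE}(n) \subseteq \mathrm{DSPACE}(n^2)$, and whether nondeterministic and deterministic linear space coincide is the open LBA problem. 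Fortunately the detour is unnecessary --- by Kuroda's theorem the context-sensitive languages are exactly $\mathrm{NSPACE}(O(n))$, and the paper's machine (like yours) is nondeterministic in its choice of pinches, so nondeterministic linear space is all that is claimed or needed.

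Second, the ``delicate combinatorial heart'' you defer at the end is in fact a one-line bookkeeping check, and it is \emph{not} supplied by the topological-graph lemmas, which only control vertex and edge counts. The label-length invariant is verified move by move: a type I fold deletes a label, a type II fold replaces the pair $(u, up)$ by $(u,p)$, and a type III fold replaces $(uap, ubq)$ by $(u, ap, bq)$, so every fold \emph{strictly decreases} the total number of letters from $X^{\pm 1}$ on the tape (this also gives termination, which you do not address); pinches split labels and merges concatenate them, preserving the total. Hence the sum of label lengths never exceeds $\sum_i |w_i|$, and combined with the constant bound $3k-3$ on the number of segments and $2k-2$ on vertex names this yields the linear space bound. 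You should state and check this invariant explicitly rather than listing it as an anticipated obstacle; once it is in place your argument is complete and coincides with the paper's.
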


\begin{proof}
We represent a graph with $t$ edges on the tape of a Turing machine by the string
$$\#v_{i_1}| v_{i_2}| w_1\#\#\dots\#\#v_{i_{2t-1}}| v_{i_{2t}}| w_t\#,$$
where  $v_{i_j}$ are binary numbers and the factor  $\#v| v'| w\#$ represents the segment $\gamma$ with $i(\gamma)=v$, $t(\gamma)=v'$ and $s(\gamma)=w$. 

On input a list of words $w_1,w_2,\dots ,w_n$,
 if $n\geq k$ we return  \texttt{NO}, else we write on the tape 
$$\#v_0| v_0| w_1\#\#v_0| v_0| w_2\#\#\dots\#\#v_0| v_0| w_n\#$$
which describes the bouquet of loops labeled $w_i$ at a vertex $v_0$. 
Let $k'$ be the rank of the free group generated by the letters appearing in the $w_i$, so $k'\leq k$.

We perform folds of types I, II, III by modifying the tape as follows: 
\begin{itemize}
	\item[I.] Scan the tape to find  factors $\#v_i| v_j| u\#$ and $\#v_i| v_k| u\#$; erase the second factor and replace $v_k$ by $v_j$ everywhere on the tape.
	\item[II.] Scan the tape to find $\#v_i| v_j| u\#$ and $\#v_i| v_k| up\#$; overwrite the second factor with $\#v_j| v_k| p\#$.
	\item[III.] Scan the tape to find $\#v_i| v_j| uap\#$ and $\#v_i| v_k| ubq\#$ with $a\neq b, a,b \in X^{\pm 1}$; erase both factors and write $\#v_i| v| u\#\#v| v_j| ap\#\#v| v_k| bq\#$ where the binary number $v$ is some value not already in use.
\end{itemize}

For pruning we scan the tape to find a factor $\#v|v'|u\#$ such that the $v$ (or $v'$) does not appear in any other factor. If such a factor is found, we delete it. 

Similarly, for merging we scan the tape for a pair of factors $\#v|v'|u_1\#$ and $\#v'|v''|u_2\#$ (or $\#v''|v'|u_2\#$) such that $v''$ does not occur in any other factor. If such a pair is found, we replace them by $\#v|v''|u_1 u_2\#$ (or $\#v|v''|u_1 u_2^{-1}\#$, respectively).

We perform a pinch by choosing either:
\begin{enumerate}
	\item two distinct vertices $v_i,v_j$ and replacing $v_j$ by $v_i$ everywhere on the tape;
	\item a vertex $v$ and a segment $(v_i,v_j,w_1w_2)$ with $|w_i|>0$ and replacing $\#v_i| v_j| w_1w_2\#$ by $\#v_i| v| w_1\#\#v| v_j| w_2\#$;
	\item two segments $(v_i,v_j,w_1w_2), (v_p,v_q,w_3w_4)$ with $|w_i|>0$, and replacing their encodings by $\#v_i| v| w_1\#\#v| v_j| w_2\#\#v_p| v| w_3\#\#v| v_q| w_4\#$ where  $v$ is some value not already in use.
 \end{enumerate}
 Note that folding and pruning moves decrease the number of letters appearing as labels of segments, and pinching and merging moves preserve this number.

The procedure starts by performing folding moves  I, II, III in any order exhaustively, and each folding is followed by all possible pruning and merging. Then a pinch is applied, and the previous two steps repeat $k'-n$ times.

Following Lemma \ref{lemma:one_pruning} we see that we will only need to perform pruning moves before the first pinch. Note that during this process, the number of letters present in the labels might decrease. In this case we need to decrease $k'$ accordingly.

Return  \texttt{YES} if the tape contains $$\#v| v| a_{i_1}\#\#\dots \#\#v| v| a_{i_{k'-n}}\#$$ with all $i_j$ distinct and $a_{i_j}\in X^{\pm 1}$,   else return  \texttt{NO}.

Termination of the algorithm is guaranteed since each fold strictly decreases the number of letters from $X^{\pm 1}$ appearing on the tape as labels of segments. The algorithm accepts precisely the primitive sets in $F_k$ by Remark~\ref{remark:base_vertex_is_irrelevant} and Lemma~\ref{lemma:fold_and_pinch}.

Since the rank of all topological segment graphs considered is smaller than the rank of the ambient free group, by Corollary~\ref{lemma:folding_estimate} the number of vertices in use at any time is at most $2k-2$, so in binary notation each vertex requires at most $\log(2k-2)$ space,  the total number of segments is at most $3k-3$, and the number of letters from $X^{\pm 1}$ appearing on the tape is at most $\sum_{i=1}^n|w_i|=N$. Thus the amount of space required at any time in the process is at most 
$$(3k-3)(2\log(2k-2)+4)+\sum_{i=1}^n|w_i|,$$ which is linear in the input size.
\end{proof}

From the proposition it follows that for every $k$ there is a linearly bounded Turing machine $T_k$ which recognises primitive sets in $F_k$, hence we can state the following corollary.
\begin{cor}\label{prop:primitivesHighRank}
	Let $F_k = F(X)$ be a free group over $X$, where $|X| = k$ and $n\leq k$. Then 
	\begin{align*}
		\mathcal{P}_{k,n} 	&= \left\{w_1\# \dots \# w_n \mid \{w_1, \dots w_n\} \mbox{ is a primitive set in $F_k$}\right\}\\
					 	&\subseteq \left(X\cup X^{-1} \cup \{\#\}\right)^*
	\end{align*}
	is a context-sensitive language. 
	
	In particular, the set $P_k$(=$\mathcal{P}_{k,1}$) of primitive elements is context-sensitive.
\end{cor}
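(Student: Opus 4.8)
The plan is to deduce Corollary~\ref{prop:primitivesHighRank} directly from the preceding Proposition, which already establishes that primitive sets in $F_k$ can be recognised in linear space by a Turing machine. Recall that a language is context-sensitive precisely when it is accepted by a linear bounded automaton, equivalently a nondeterministic Turing machine whose tape usage is bounded by a linear function of the input length. So the first step is simply to observe that the Proposition exhibits, for each fixed $k$, a Turing machine whose space usage on input $w_1\#\cdots\#w_n$ is at most
\[
(3k-3)(2\log(2k-2)+4)+\sum_{i=1}^{n}|w_i|,
\]
and to note that, for fixed $k$, the additive constant $(3k-3)(2\log(2k-2)+4)$ is absorbed into the linearity, so the bound is $O(N)$ where $N=\sum_i|w_i|$ is (up to the $n-1$ separator symbols $\#$) the length of the input. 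Hence $\mathcal{P}_{k,n}$ is accepted in linear space and is therefore context-sensitive.

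Before invoking the space bound, I would want to be careful about one bookkeeping point: the algorithm in the Proposition rejects inputs with $n\geq k$ and accepts a specific target configuration, so I would first check that the set it accepts is exactly $\mathcal{P}_{k,n}$ as defined (i.e.\ that the encoding of the input tuple as a bouquet of loops, the sequence of folds, merges, prunes and $k'-n$ pinches, and the acceptance test for the elementary wedge, together correctly realise the criterion of Lemma~\ref{lemma:fold_and_pinch}). The Proposition already asserts this correctness, so here it is enough to cite it. The special case $n=1$ then gives that $P_k=\mathcal{P}_{k,1}$ is context-sensitive, which is the final clause of the statement.

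The only genuinely subtle point, and the step I expect to require the most care, is confirming that the machine may be taken \emph{nondeterministic but still linear space}, since context-sensitivity is exactly linear space for \emph{nondeterministic} LBAs. The procedure makes nondeterministic choices (the order of folds, which vertices or segments to pinch, which value to assign a new vertex), but the key invariant supplied by Corollary~\ref{lemma:folding_estimate} is that \emph{every} intermediate topological segment graph arising has rank strictly below $k$, so its vertex count never exceeds $2k-2$ and its segment count never exceeds $3k-3$, regardless of the nondeterministic path taken. Combined with the observation in the Proposition that folds and prunes only decrease, while pinches and merges preserve, the total number of letters from $X^{\pm1}$ appearing as labels, the tape content on any branch stays within the stated linear bound. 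Thus no nondeterministic run can exceed linear space, which is exactly what the definition of context-sensitive demands.
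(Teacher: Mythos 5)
Your proposal is correct and matches the paper's own (very brief) argument: the corollary is deduced directly from the preceding Proposition by observing that a linearly space-bounded (nondeterministic) Turing machine is exactly a linear bounded automaton, so the language it accepts is context-sensitive. Your additional care about the nondeterministic branches all respecting the linear bound, and about the constant term being absorbed, is sound but goes beyond what the paper spells out.
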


\section{Co-word problem for Grigorchuk group is ET0L}\label{sec:Grigorchuk}
    In this section we show that the co-word problem for the Grigorchuk group is ET0L, improving on Holt and R\"over's result in \cite{MR2274726}, where they showed it is indexed. It is still an open question  whether the co-word problem for the Grigorchuk's group is context-free.
    
    \subsection{Generators and the word problem}
    \label{sub:wp_algorithm}
    We refer the reader to \cite{delaharpe} for more details, here we give the essentials for our proof. 
    Let $\T$ denote the infinite rooted binary tree and let $\A = \Aut(\T)$. Note that $\A \simeq \A \wr C_2 \simeq (\A \times \A) \rtimes C_2$, where $C_2 = \langle \alpha \mid \alpha^2 = 1\rangle$, so every $g \in \A$ can be uniquely expressed as $g = (g_L, g_R) \alpha_g$ for some $g_L, g_R \in \A$ and $\alpha_g \in C_2$. The Grigorchuk group is $G = \langle a, b, c, d \rangle \leq \A$, where the generators $a,b,c,d \in \A$ are given by
    \begin{displaymath}
        \begin{array}{cccc}
            a = (1,1)\alpha, &b = (a,c)1, &c = (a,d)1, &d = (1,b)1.
        \end{array}
    \end{displaymath}
   One can easily verify that the following identities hold in $G$:
    \begin{equation}
        \label{eq:reductions}
        \begin{array}{c}
            a^2 = b^2 = c^2 = d^2 = 1,\\
            bc = cb = d, \quad cd = dc = b, \quad db = bd = c.
        \end{array}
    \end{equation}
    Starting with an arbitrary word $w \in \{a,b,c,d\}^*$, one can rewrite $w$ via the identities in (\ref{eq:reductions}) to a word $w'$ which represents the same element in $G$ and does not contain any of
    \begin{displaymath}
    	aa,bb,cc,dd, bc, cb, bd, db, cd, dc
	\end{displaymath}
	 as a subword. Thus $w'$ has the form $w' = x_0 a x_1 \dots x_{n-1} a x_n$, where $x_i \in \{1,b,c,d\}$ for $0 \leq i \leq n$ and $x_i \neq 1$ for $1 < i < n-1$, so we say that $w'$ is \emph{alternating} or \emph{reduced}. Every non-trivial element $g \in G$ can be represented by a reduced word. The following remark has an analogous proof to that of the uniqueness of reduced words in free groups (see \cite[Section I.1]{MR0577064}).
    \begin{rmk}
    	\label{remark:reduction_uniqueness}
    	Every word $w \in \{a,b,c,d\}^*$ can be rewritten via the identities in (\ref{eq:reductions}) to a unique reduced word.
    \end{rmk}
   	However, not every reduced word represents a nontrivial element: the word $dadadada$ is reduced, yet it represents the identity in $G$.
    
    Let $G_1$ be the subgroup of $G$ consisting of all elements that fix the first level of $\T$. Then $|G:G_1| = 2$, $G_1 = \langle b, c, d, aba, aca, ada \rangle$ and     \begin{displaymath}
        \begin{array}{ccc}
            aba = (c,a)1, &aca = (d,a)1, &ada = (d,1)1.
        \end{array}
    \end{displaymath}
    Every element $g \in G_1$ can be expressed as an alternating sequence $g = x_0 x^a_1 \dots x_{n-1} x^a_n$, where $x_i \in \{1,b,c,d\}$ and $x^a_i \in \{1, aba, aca, ada\}$ for $0 \leq i \leq n$ such that $x_i \neq 1$ if $i>0$ and $x^a_i \neq 1$ if $i < n$. 
    
    Let $\phi_L, \phi_R \colon G_1 \to G$ be the group homomorphisms defined as
    \begin{displaymath}
        \phi_L \colon \begin{cases}
            b   &\rightarrow a,\\
            c   &\rightarrow a,\\
            d   &\rightarrow 1,\\
            aba &\rightarrow c,\\
            aca &\rightarrow d,\\
            ada &\rightarrow b,
        \end{cases}
        \quad
        \phi_R \colon \begin{cases}
            b   &\rightarrow c,\\
            c   &\rightarrow d,\\
            d   &\rightarrow b,\\
            aba &\rightarrow a,\\
            aca &\rightarrow a,\\
            ada &\rightarrow 1,
        \end{cases}
    \end{displaymath}
    and define $\phi \colon G_1 \to G \times G$ as $\phi(g) = (\phi_L(g), \phi_R(g))$. Then $\phi$ is injective, and $|\phi_i(w)| < \frac{1}{2}|w| + 1 < |w|$ for every reduced $w \in \{a,b,c,d\}^*$ representing some element in $G_1$ such that $|w| > 1$. Let $L_S$ be the set of words with an odd number of $a$'s, that is,
    \begin{displaymath}
    	L_S = \{w \in \{a,b,c,d\}^* \mid |w|_a \equiv 1 \mod 2\}.
	\end{displaymath}
	Notice that the parity of $|w|_a$ is invariant under the rewrite rules (\ref{eq:reductions}). Indeed, if $w \in L_S$ then $w \neq_G 1$. 
	
	 The following is the outline of the word problem algorithm in Grigorchuk's group (see \cite[Section VIII.E]{delaharpe} for more details and Figure \ref{fig:example} for an example). We use $\epsilon$ to denote the empty string.
	\begin{enumerate}
		\item reduce $w$
		\item if $w = \epsilon$ answer \texttt{YES} (meaning $w =_G 1$),
		\item if $w \in L_S$ answer \texttt{NO} (meaning $w \neq_G 1$),
		\item otherwise answer $(\phi_L(w) =_G 1 \texttt{ and } \phi_R(w) =_G 1)$.
	\end{enumerate}
	Obviously, $w \neq_G 1$ if and only if there are sequences $w_0, \dots, w_n \in \{a, b, c, d\}$ and $\phi_1, \dots, \phi_n \in \{\phi_L, \phi_R\}$  such that $w_n =_G w$, $w_{i-1} =_G \phi_i(w_i)$ for $i = 1, \dots, n$ and $w_0 \in L_S$. The main idea behind our grammar is to invert this process, i.e. start with a word in $w_0 \in L_S$ and generate a sequence of words $w_1, \dots, w_n$ such that $w_{i-1} =_G \phi_i(w_i)$.
     
    \subsection{ET0L grammar}
    In this subsection we introduce the ET0L grammar used to generate the co-word problem for Grigorchuk's group and give an informal explanation of the roles of the corresponding tables.
    
    Our grammar works over the extended alphabet
    \begin{displaymath}
    	\Sigma = \{S_0, S_1, a, b, c, d, \delta, \#\},
	\end{displaymath}
	where $S_0$ is the start symbol, along with tables $s, p, h_L, h_R, u, t$ and rational control
	\begin{displaymath}
		\mathcal{R} = s^* \left\{p^* \{h_L, h_R \} u^* t \right \}^*.
	\end{displaymath}
		
	The process of generating words can be split in three phases. It is important to note that the second and third phase can occur multiple times, as can be seen in Example \ref{ex:generation}.\\
	
\subsubsection*{Phase 1: Generate $L_S$} 
	
	First we generate the language $L_S$ of words with an odd number of occurrences of $a$. This is done by table \ref{tab:seeding}, which can be seen as the grammar version of a two state finite automaton producing the regular language $L_S$. We call the words in $L_S$ \emph{seeds}.    
    \begin{equation*}
    	\tag{$s$}
    	\label{tab:seeding}
    	\begin{array}{lcl}
        	\multicolumn{3}{l}{\mbox{Generate seeds}}\\
        	\hline
         	S_0 & \rightarrow   &  a S_1, b S_0, c S_0, d S_0\\
         	S_1 & \rightarrow   & a S_0, b S_1, c S_1, d S_1, \epsilon\\
          \end{array}
    \end{equation*}

	\begin{lem}
		\label{lemma:seeding}
		Let $w \in \{a,b,c,d\}^*$. Then $w \in L_S$ if and only if $S_0 \longrightarrow^{s^*} w$. In particular, if $S_0 \longrightarrow^{s^*} w$ then $w \neq_G 1$.
	\end{lem}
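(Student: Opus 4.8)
The plan is to exploit the fact that, although a table in an ET0L system rewrites every symbol in parallel, the table $s$ behaves here exactly like a sequential two-state finite automaton. The key observation I would record first is a \emph{single-nonterminal invariant}: each rule of $s$ for $S_0$ or $S_1$ emits exactly one terminal from $\{a,b,c,d\}$ followed by at most one nonterminal, while every terminal is fixed by $s$ under the convention $x \to x$. Since we start from the single symbol $S_0$, an easy induction on the number of applied copies of $s$ shows that every word $v$ with $S_0 \longrightarrow^{s^*} v$ contains at most one nonterminal, and when present it is the rightmost letter. Thus applying $s$ to a string $u S_i$ (with $u \in \{a,b,c,d\}^*$) simply fixes $u$ and rewrites the single nonterminal, so the parallel rewriting coincides with reading $u$ left to right in the automaton with states $S_0, S_1$.

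For the forward direction I would strengthen the statement to an invariant suitable for induction: every $v$ with $S_0 \longrightarrow^{s^*} v$ has one of the forms (i) $v = u S_0$ with $u \in \{a,b,c,d\}^*$ and $|u|_a$ even; (ii) $v = u S_1$ with $|u|_a$ odd; or (iii) $v = u \in \{a,b,c,d\}^*$ with $|u|_a$ odd. The base case $v = S_0$ is case (i) with $u = \epsilon$. For the inductive step I would check the rules: $S_0 \to aS_1$ and $S_1 \to aS_0$ append an $a$ and swap the subscript, flipping the parity so it still matches the state; the rules appending $b,c,d$ preserve both state and parity; $S_1 \to \epsilon$ moves from case (ii) to case (iii), keeping the odd parity; and a word already in case (iii) is fixed by $s$. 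In particular, whenever $v \in \{a,b,c,d\}^*$ is terminal it falls under case (iii), so $|v|_a$ is odd and $v \in L_S$.

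For the converse, given $w = y_1 \cdots y_n \in L_S$ I would construct an explicit derivation by processing $w$ left to right: starting from $S_0$, at the $j$-th step apply the unique rule of $s$ that outputs $y_j$ and sends the nonterminal to the state recording the parity of $y_1 \cdots y_j$. Since $|w|_a$ is odd, after $n$ steps the emitted prefix equals $w$ and the trailing nonterminal is $S_1$; one final application of $S_1 \to \epsilon$ then produces exactly $w$, giving $S_0 \longrightarrow^{s^*} w$.

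The only genuinely delicate point is the first one, namely confirming that the parallel ET0L semantics reduces to the automaton semantics via the single-nonterminal invariant; everything after that is routine parity bookkeeping. Finally, the concluding clause is immediate: by the remark preceding the lemma, $w \in L_S$ implies $w \neq_G 1$, so $S_0 \longrightarrow^{s^*} w$ forces $w \neq_G 1$.
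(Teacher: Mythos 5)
Your proof is correct and matches the paper's (implicit) argument: the paper offers no written proof of this lemma, only the observation that the table $s$ ``can be seen as the grammar version of a two state finite automaton producing the regular language $L_S$,'' which is precisely the single-nonterminal invariant plus parity bookkeeping you carry out in detail. (Your opening claim that every rule of $s$ emits exactly one terminal slightly misstates the rule $S_1 \to \epsilon$, but your case analysis handles it correctly, so nothing is affected.)
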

	
\subsubsection*{Phase 2: Invert $\phi_L$ and $\phi_R$}

    Once we have produced a seed $w_0$, we generate a sequence of words $w_1, \dots, w_n$ such that $w_{i-1} =_G \phi_i(w_i)$, and we create $w_i$ from $w_{i-1}$ by `inverting' the maps $\phi_L$ and $\phi_R$; this is achieved via tables \ref{tab:pulling}:  
     \begin{displaymath}
    	\tag{$h_L, h_R$}
    	\label{tab:pulling}
        \begin{array}{lcl}
            \multicolumn{3}{l}{\mbox{Invert $\phi_L$}}\\
            \hline
            a 			&\rightarrow& b,c\\
            b 			&\rightarrow& ada\\
            c 			&\rightarrow& aba\\
            d 			&\rightarrow& aca\\
            \delta 		&\rightarrow& d
        \end{array}
        \quad
        \begin{array}{lcl}
            \multicolumn{3}{l}{\mbox{Invert $\phi_R$}}\\
            \hline
             a 			&\rightarrow& aba, aca\\
            b 			&\rightarrow& d\\
            c 			&\rightarrow& b\\
            d 			&\rightarrow& c\\
            \delta 		&\rightarrow& ada
        \end{array}
    \end{displaymath}

    Table \ref{tab:preprocessing} introduces a new symbol $\delta$, which serves as a placeholder for the empty word that resulted from applying $\phi_L$ and $\phi_R$ (recall that $\phi_L(d) = 1$, $\phi_R(ada) = 1$).
     \begin{displaymath}
	\tag{$p$}
	\label{tab:preprocessing}
	\begin{array}{lcl}
		\multicolumn{3}{l}{\mbox{Insert $\delta$}}\\
        		\hline
         	a &\rightarrow& a, \delta a, a \delta \\
		b &\rightarrow& b, \delta b, b \delta \\
		c &\rightarrow& c, \delta c, c \delta \\
		d &\rightarrow& d, \delta d, d \delta
		\end{array}
	\end{displaymath}

    \begin{lem}
		\label{lemma:pulling}
		Let $w, w' \in \{a,b,c,d\}^*$ and let $i \in \{L,R\}$ be given. Then $w=_G \phi_i(w')$ if and only if $w \longrightarrow^{p^* h_i} w'$. In particular, if $w \longrightarrow^{p^* h_i} w'$ and $w \neq_G 1$ then $w' \neq_G 1$.
	\end{lem}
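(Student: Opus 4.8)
The plan is to prove the biconditional by analysing exactly what the composite rewriting $w \longrightarrow^{p^* h_i} w'$ does to a word, letter by letter, and matching this against the definition of $\phi_i$. The key observation is that the table $p$ and the tables $h_L, h_R$ are designed precisely to invert the maps $\phi_L, \phi_R$ from Section~\ref{sub:wp_algorithm}: recall $\phi_L$ sends $b,c \mapsto a$, $d \mapsto 1$, $aba \mapsto c$, $aca \mapsto d$, $ada \mapsto b$, and $\phi_R$ sends $b \mapsto c$, $c \mapsto d$, $d \mapsto b$, $aba \mapsto a$, $aca \mapsto a$, $ada \mapsto 1$. Comparing with table \ref{tab:pulling}, one sees that for each syllable-type of the image word, the inverting table offers exactly the preimages allowed by $\phi_i$ (for instance, $h_L$ sends $a \to b$ or $a \to c$, which inverts $\phi_L(b)=\phi_L(c)=a$; and $h_L$ sends $\delta \to d$, inverting $\phi_L(d)=1$). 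The role of $p$ is to account for the letters of $w'$ that $\phi_i$ collapses to the empty word, by first inserting placeholder symbols $\delta$ into $w$ at the appropriate positions, which $h_i$ then expands into the collapsed syllable.

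First I would set up the forward direction ($\Leftarrow$). Suppose $w \longrightarrow^{p^* h_i} w'$, so there is an intermediate word $\tilde w$ with $w \longrightarrow^{p^*} \tilde w$ and $\tilde w \longrightarrow^{h_i} w'$. Since $p$ only inserts $\delta$'s adjacent to existing letters and replaces letters by themselves, $\tilde w$ is obtained from $w$ by inserting some number of $\delta$ symbols; in particular deleting all $\delta$'s from $\tilde w$ returns $w$. I would then apply $\phi_i$ to $w'$ and verify, by checking each rule of $h_i$ against the definition of $\phi_i$, that $\phi_i$ undoes the rewriting syllable-by-syllable: every rule $x \longrightarrow^{h_i} y$ in table \ref{tab:pulling} satisfies $\phi_i(y) =_G x$ when $x \in \{a,b,c,d\}$, and $\phi_i(y) =_G 1$ when $x = \delta$. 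Reading this off across the whole word gives $\phi_i(w') =_G \tilde w$ with the $\delta$'s contributing trivially, hence $\phi_i(w') =_G w$.

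For the reverse direction ($\Rightarrow$), suppose $w =_G \phi_i(w')$. Here I would use the alternating/reduced normal form of $w'$ as an element of $G_1$ described in Section~\ref{sub:wp_algorithm}, writing $w' = x_0 x_1^a \cdots x_{n-1} x_n^a$ with $x_i \in \{1,b,c,d\}$ and $x_i^a \in \{1,aba,aca,ada\}$, so that $\phi_i$ acts syllable-by-syllable. For each syllable of $w'$ I would select the matching rule of $h_i$ to reconstruct it from the corresponding letter (or from a $\delta$, in the case of a syllable that $\phi_i$ kills), and I would use $p^*$ to pre-insert exactly those $\delta$'s. Care is needed that the multiple choices in the nondeterministic tables ($a \to b,c$ in $h_L$, etc.) can always be matched to the actual syllables of $w'$, which is where the structure of the normal form is essential. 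The final sentence ``if $w \longrightarrow^{p^* h_i} w'$ and $w \neq_G 1$ then $w' \neq_G 1$'' is then immediate: from the biconditional $w =_G \phi_i(w')$, and since $\phi_i$ is a homomorphism with $\phi_i(1) = 1$, a trivial $w'$ would force $w =_G 1$, contrary to hypothesis.

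The main obstacle I anticipate is the bookkeeping of the $\delta$ placeholders together with the nondeterminism of the tables: one must verify both that every insertion-and-expansion sequence produces a genuine $\phi_i$-preimage (soundness, the $\Leftarrow$ direction), and conversely that \emph{every} preimage can be realised by some choice of $\delta$-insertions and some choice among the nondeterministic rules (completeness, the $\Rightarrow$ direction). Handling the boundary syllables $x_0$ and $x_n^a$ (which may be trivial) and ensuring the $\delta$'s land in positions compatible with the alternating structure is the delicate part; I expect this to be a careful but essentially routine case check once the syllable decomposition is fixed.
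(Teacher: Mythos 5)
The paper itself offers no argument here beyond the single sentence ``The proof follows immediately from an analysis of the tables,'' so your letter-by-letter matching of the rules of $h_L,h_R$ against the definitions of $\phi_L,\phi_R$, with $p$ supplying placeholders for the syllables that $\phi_i$ kills, is exactly the analysis being alluded to. Your ($\Leftarrow$) direction is sound: every syllable produced by $h_i$ is one of the generators of $G_1$, $\phi_i$ sends it back to the letter (or to $\epsilon$, in the $\delta$ case) it came from, and since $\phi_i$ is a homomorphism the decomposition of $w'$ into these syllables need not be the canonical alternating one for the conclusion $\phi_i(w')=_G w$ to follow. The final sentence of the lemma also follows as you say.

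There is, however, a genuine gap in your ($\Rightarrow$) direction, inherited from the statement itself: with $=_G$ read as equality in the group, the implication is false. Take $i=L$, $w=bc$ and $w'=aca$. Then $\phi_L(aca)=d=_G bc$, so $w=_G\phi_L(w')$; but $p^*$ applied to $bc$ yields only words of the form $\delta^{j_0}b\,\delta^{j_1}c\,\delta^{j_2}$, which $h_L$ sends to $d^{j_0}(ada)d^{j_1}(aba)d^{j_2}$, and no such word equals $aca$ (the letter $c$ can never appear). Your reconstruction ``select the matching rule of $h_i$ for each syllable of $w'$'' silently assumes that $w$ \emph{is}, letter for letter, the word obtained by applying $\phi_i$ syllable-wise to the alternating form of $w'$ --- which is strictly stronger than $w=_G\phi_i(w')$, and $p^*h_i$ contains no rules that could absorb the difference (a group-relation rewrite such as $bc\to d$ lives in the $u^*t$ phase, not here). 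The repair is to prove, and to use, only the literal-word version of ($\Rightarrow$): if $w$ is the word obtained by applying $\phi_i$ syllable by syllable to the reduced form of $w'$, then $w\longrightarrow^{p^*h_i}w'$. That is exactly the form in which the lemma is invoked in Lemma~\ref{lemma:completeness}, where $w_{i-1}'$ is by construction the literal image of the reduced word $w_i$, so nothing downstream is harmed; but as a biconditional about $=_G$ your argument (and the paper's claim) does not go through. A similar boundary failure occurs at $w=\epsilon$, $w'=d$, since $p$ cannot insert a $\delta$ into the empty word.
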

The proof follows immediately from an analysis of the tables.

\subsubsection*{Phase 3: Use group relations and insert trivial subwords}
     
    The following table inverts the reduction process induced by the identities in (\ref{eq:reductions}). We introduce the symbol $\#$ as a placeholder for the reduction process. This means that $\#$ signifies the position of a subword that reduces to the trivial word.
    \begin{displaymath}
    	\tag{$u$}
    	\label{tab:unreducing}
        \begin{array}{lcl}
            \multicolumn{3}{l}{\mbox{Unreduce}}\\
            \hline
            a	&\rightarrow& a, \#a, a\#\\
            b	&\rightarrow& b, \#b, b\#, c \# d, d \# c\\
            c	&\rightarrow& c, \#c, c\#,  b \# d, d \# b\\
            d   &\rightarrow& d, \#d, d\#, b \# c, c \# b\\            
            \#  &\rightarrow&  \#, a \#a, b \# b, c \# c, d \# d           
        \end{array}
    \end{displaymath}

    At this stage we remove all the occurrences of the placeholder $\#$. This is achieved by table \ref{tab:tidying}:
    \begin{displaymath}
		\tag{$t$}
		\label{tab:tidying}
			\begin{array}{lcl}
				\multicolumn{3}{l}{\mbox{Tidy \#}}\\
        				\hline
         			 \#	&\rightarrow& \epsilon
		\end{array}
	\end{displaymath}
     
           	\begin{lem}
		\label{lemma:reduction}
		Let $w, w' \in \{a,b,c,d\}^*$ be arbitrary. Then $w$ can be obtained from $w'$ by reducing rules (induced by the identities in (\ref{eq:reductions})) if and only if $w \longrightarrow^{u^*t} w'$. In particular, if $w \longrightarrow^{u^*t} w'$ and $w \neq_G 1$ then $w' \neq_G 1$.
	\end{lem}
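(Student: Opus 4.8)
The plan is to prove both implications by exploiting the \emph{placeholder-erasing} map: for a word $v$ over $\{a,b,c,d,\#\}$ let $\overline{v}\in\{a,b,c,d\}^*$ be the word obtained by deleting every occurrence of $\#$, so that table \ref{tab:tidying} realises exactly $v\longrightarrow^{t}\overline{v}$. The crucial observation is that every rule of table \ref{tab:unreducing} is a \emph{reverse reduction up to placeholders}: for each rule $x\longrightarrow^{u} y$ with $x\in\{a,b,c,d,\#\}$, the word $\overline{y}$ reduces to $\overline{x}$ via the rules induced by (\ref{eq:reductions}), where we set $\overline{\#}=\epsilon$. Indeed $\overline{c\#d}=cd$ reduces to $b$, $\overline{x\#x}=xx$ reduces to $\epsilon$, and the remaining rules only insert or erase $\#$ and so leave $\overline{(\cdot)}$ unchanged.

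For the implication $(\Leftarrow)$, suppose $w\longrightarrow^{u^*t} w'$, say $w\longrightarrow^{u} v_1 \longrightarrow^{u}\cdots\longrightarrow^{u} v_k \longrightarrow^{t} w'$ with $w'=\overline{v_k}$. Since each symbol of $v_j$ is rewritten independently, the block-wise observation above shows that $\overline{v_{j+1}}$ reduces to $\overline{v_{j}}$ (reduce each image block $\overline{y}$ down to $\overline{x}$ inside $\overline{v_{j+1}}$, the surrounding letters never preventing a local reduction). Writing $v_0=w$ and chaining these reductions yields that $w'=\overline{v_k}$ reduces to $\overline{v_0}=w$. This is the easier direction.

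For $(\Rightarrow)$, assume $w'$ reduces to $w$ in $n$ steps. I would induct on $n$ and construct a word $v$ with $\overline{v}=w'$ and $w\longrightarrow^{u^*} v$, after which $v\longrightarrow^{t} w'$ completes the derivation. Reversing the reduction sequence gives a chain $w=r_n, r_{n-1},\dots, r_0=w'$ in which each step $r_i\mapsto r_{i-1}$ is an \emph{un-reduction}: either a product expansion $x\mapsto yz$ with $yz\in\{bc,cb,cd,dc,db,bd\}$, or the insertion of a pair $xx$. A product expansion at a chosen occurrence of $x$ is realised by a single application of table \ref{tab:unreducing}, rewriting that one letter by the matching rule $x\longrightarrow^u y\#z$ while every other symbol uses its identity rule (the first entry of each row, together with $\#\longrightarrow^u\#$); a pair insertion is realised by two applications, first $p\longrightarrow^u p\#$ (or $q\longrightarrow^u\#q$) to manufacture a placeholder beside an adjacent letter, then $\#\longrightarrow^u x\#x$. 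In both cases all previously built structure and all earlier placeholders are frozen by identity rules, so after processing every un-reduction we obtain $v=v_0$ with $\overline{v_0}=r_0=w'$ and $w\longrightarrow^{u^*} v_0$, and the single final $t$ erases all placeholders at once.

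The main obstacle is the bookkeeping in $(\Rightarrow)$: one must verify that each un-reduction can be carried out \emph{locally}, leaving untouched the rest of the word, including placeholders already introduced by earlier product expansions, which relies on the identity rule being available for every symbol of the extended alphabet. The only genuine gap is the pair-insertion step when the current word is empty, since then there is no adjacent letter on which to manufacture a placeholder; this is why the lemma is applied to nonempty seeds, and for arbitrary $w$ one assumes $w\neq\epsilon$ (if $w=\epsilon$, the only $w'$ with $\epsilon\longrightarrow^{u^*t} w'$ is $\epsilon$ itself). Finally, the ``in particular'' clause is immediate: each reducing rule is an instance of a relation in (\ref{eq:reductions}), so $w'$ reducing to $w$ gives $w'=_G w$, whence $w\neq_G 1$ forces $w'\neq_G 1$.
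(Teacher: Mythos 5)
Your proof is correct, and in fact it supplies more than the paper does: the paper states this lemma with no proof at all (the remark ``the proof follows immediately from an analysis of the tables'' is attached to Lemma~\ref{lemma:pulling}, and nothing accompanies Lemma~\ref{lemma:reduction}), so your argument is a full elaboration of what the authors treat as immediate. Your key device --- the erasing map $v \mapsto \overline{v}$ together with the observation that every rule $x \longrightarrow^{u} y$ of table \ref{tab:unreducing} satisfies ``$\overline{y}$ reduces to $\overline{x}$'' --- is exactly the right invariant for the backward implication, and your simulation of the reversed reduction sequence (one $u$-application per product expansion, at most two per pair insertion, freezing all other symbols with their identity rules) correctly handles the forward implication. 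You also catch something the paper glosses over: the stated ``if and only if'' literally fails at $w = \epsilon$, since $aa$ reduces to $\epsilon$ but no table can rewrite the empty word to anything nonempty, so $\epsilon \longrightarrow^{u^{*}t} aa$ is impossible; as you note, this is harmless where the lemma is applied, because in Lemma~\ref{lemma:completeness} the word being un-reduced is a reduced representative of a nontrivial element and hence nonempty, but strictly the lemma should assume $w \neq \epsilon$. The one step worth tightening is the pair-insertion case: if a $\#$ already occupies the insertion site you can apply $\# \longrightarrow^{u} x\#x$ directly rather than first manufacturing a placeholder from an adjacent letter; either way the construction goes through for nonempty $w$, so there is no gap in your argument.
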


        \begin{example}
        \label{ex:generation} Figure~\ref{fig:example} demonstrates the word problem algorithm on input $bcddacbabcaa$, showing it to be nontrivial.
       
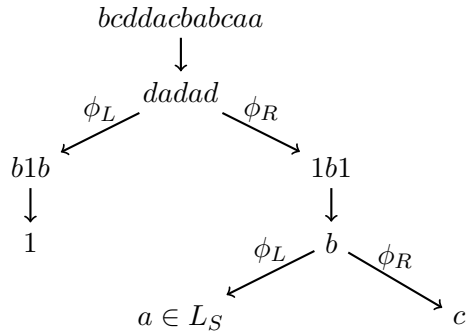
\begin{figure}[h!]
\begin{center}
\begin{tikzpicture}[thick,scale=1, every node/.style={scale=1}]

\draw (3,5) node (A) {$bcddacbabcaa$};
\draw (3,4) node (B) {$dadad$};
\draw (1,3) node (C) {$b1b$};
\draw (5,3) node (D) {$1b1$};
\draw (5,2) node (E) {$b$};
\draw (3,1) node (F) {$a \in L_S$};
\draw (6.7,1) node (G) {$c$};
\draw (1,2) node (H) {$1$};

\draw[->] (A) -- (B);
\draw[->] (B) -- node[above] {$\phi_L$} (C);
\draw[->] (B) --  node[above] {$\phi_R$}  (D);
\draw[->] (D) -- (E);
\draw[->] (E) -- node[above] {$\phi_L$} (F);
\draw[->] (E) -- node[above] {$\phi_R$} (G);
\draw[->] (C) -- (H);
\end{tikzpicture}
\caption{Word problem algorithm applied to $bcddacbabcaa $.}\label{fig:example}
\end{center}
\end{figure}

 This word can be obtained from our grammar as follows.

	\[	\begin{array}{lll}	
			  \text{Phase 1:} & \ \ \ &   S_0 \longrightarrow^{s} a S_1 \longrightarrow^{s} a\\
			  \\
    \text{Phase 2:} && a \longrightarrow^{h_L} b\\
    \\
   \text{Phase 3:} & &  b \longrightarrow^t b \\
     \\
     
    \text{Phase 2:} & & b \longrightarrow^{p} \delta b \longrightarrow^{p} \delta b \delta \longrightarrow^{h_R} dadad\\
    \\
       \text{Phase 3:} & &
       		dadad 	 \longrightarrow^{u} b\#c\#ac\#bab\#c \longrightarrow^{u}  b\#cd\#dac\#bab\#c \# \\& &\longrightarrow^{u} b\#cd\#dac\#bab\#c a\#a 
				\longrightarrow^{t} bcddacbabcaa.
			\end{array}\]

       \end{example}
        
\subsection{Proof that the ET0L grammar generates exactly the co-word problem}\label{sec:GProof}

Let $L'$ be the language generated by the ET0L grammar with alphabet $\Sigma$, tables $s, p, h_L, h_R, u, c$ given above and rational control 
\begin{displaymath}
	\mathcal{R} = s^* \left\{p^* \{h_L, h_R \} u^* t \right \}^*.
\end{displaymath}
Note that the rational control $\mathcal{R}$ is equivalent to $s^* \{p^*h_L u^*t, p^*h_R u^*t\}^*$.
Following Definition \ref{def:et0lasfeld}, $L'$ is ET0L. Now consider $L = L' \cap \{a,b,c,d\}^*$. As the class of ET0L languages is closed under intersection with regular languages, $L$ is an ET0L language. Intersecting $L'$ with $\{a,b,c,d\}^*$ effectively discards all words containing symbols $S_0, S_1$. With this in mind, we will assume that we are only working with words that do not contain symbols $S_0$ and $S_1$.
				
	Combining Lemmas~\ref{lemma:seeding},~\ref{lemma:pulling} and \ref{lemma:reduction} we can immediately show that our grammar produces words that represent non-trivial elements in the group.
	\begin{lem}
		\label{lemma:consistency}
		Let $w \in \{a,b,c,d\}^*$ be arbitrary. If $w \in L$ then $w \neq_G 1$.	
	\end{lem}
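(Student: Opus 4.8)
The plan is to prove Lemma~\ref{lemma:consistency} by tracing the structure of the rational control $\mathcal{R} = s^*\{p^*\{h_L,h_R\}u^*t\}^*$ and showing that each block of tables preserves the invariant of representing a nontrivial element in $G$. The key observation is that the generating process described in Phases~1--3 is precisely the inverse of the word problem algorithm: starting from a seed in $L_S$ (which is nontrivial by Lemma~\ref{lemma:seeding}), each subsequent block of rewrites takes a word $w$ and produces a word $w'$ with $w =_G \phi_i(w')$ (up to unreducing), so that nontriviality propagates forward.

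First I would fix a word $w \in L$ and, since $w \in L(H)\cap\{a,b,c,d\}^*$, obtain a derivation $S_0 \longrightarrow^r w$ for some $r \in \mathcal{R}$. Because $r$ lies in the language of the regular expression $s^*\{p^*\{h_L,h_R\}u^*t\}^*$, I can factor $r$ as an initial segment $s^{j}$ followed by $m\geq 0$ blocks, each block being of the form $p^*h_iu^*t$ with $i \in \{L,R\}$. This gives intermediate words $w_0, w_1, \dots, w_m = w$, where $S_0 \longrightarrow^{s^j} w_0$ and $w_{k-1}\longrightarrow^{p^*h_{i_k}u^*t} w_k$ for each $k$. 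The base case is handled by Lemma~\ref{lemma:seeding}: $S_0 \longrightarrow^{s^*} w_0$ forces $w_0 \in L_S$, hence $w_0 \neq_G 1$.

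The inductive step is where the three earlier lemmas combine. Within a single block $p^*h_{i_k}u^*t$ I would split the rewrite $w_{k-1}\longrightarrow^{p^* h_{i_k}u^*t} w_k$ at the table $t$: first $w_{k-1}\longrightarrow^{p^*h_{i_k}} v$ for some intermediate word $v$, and then $v\longrightarrow^{u^*t}w_k$. Applying Lemma~\ref{lemma:pulling} to the first part (with the placeholder $\delta$ eventually interpreted correctly) gives that nontriviality of $w_{k-1}$ implies nontriviality of $v$, and applying Lemma~\ref{lemma:reduction} to the second part gives that nontriviality of $v$ implies nontriviality of $w_k$. Chaining these two implications yields $w_{k-1}\neq_G 1 \Rightarrow w_k\neq_G 1$, completing the induction and hence showing $w = w_m \neq_G 1$.

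The main obstacle I anticipate is bookkeeping the auxiliary symbols $\delta$ and $\#$ cleanly across the block decomposition. The statements of Lemmas~\ref{lemma:pulling} and~\ref{lemma:reduction} are phrased for words in $\{a,b,c,d\}^*$, but the intermediate word $v$ produced after $p^*h_{i_k}$ may still carry $\delta$'s (or, within the unreducing phase, $\#$'s) before the final table clears them. I would need to verify that at the interface between consecutive tables the intermediate words land in the domain required by the next lemma, or else state a mild extension of those lemmas accounting for the placeholder symbols; since the excerpt notes that the proofs of these lemmas ``follow immediately from an analysis of the tables,'' I expect this alignment to be routine once the factorisation is set up, but it is the one point requiring genuine care rather than mechanical application.
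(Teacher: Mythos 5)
Your proposal is correct and follows essentially the same route as the paper: decompose the derivation according to the rational control into a seed phase followed by blocks $p^*h_i u^*t$, and induct on the number of blocks using Lemmas~\ref{lemma:seeding}, \ref{lemma:pulling} and \ref{lemma:reduction} in exactly the way you describe. The bookkeeping worry you raise is already resolved by the fact that Lemmas~\ref{lemma:pulling} and \ref{lemma:reduction} are stated for the complete units $p^*h_i$ and $u^*t$, at whose boundaries the placeholders $\delta$ and $\#$ have already been eliminated.
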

	\begin{proof}
		If $w \in L$ then, by definition, there is a sequence of words $$w_0, w_1, w_1', \dots, w_n, w_n' \in \{a,b,c,d\}^*$$ and a sequence of tables $h_1, \dots, h_n \in \{h_L, h_R\}$ such that
		\begin{enumerate}
			\item $s \longrightarrow^{s^*} w_0$,
			\item $w_{i-1} \longrightarrow^{p^*h_i} w_{i}'$ for $i = 1, \dots, n$
			\item $w_i' \longrightarrow^{u^* c} w_i$,
			\item $w_n = w.$
		\end{enumerate}
		We use induction on $n$. If $n = 0$ then by Lemma \ref{lemma:seeding} we see that $w_0 \in L_S$, hence $w_0 \neq_G 1$. 
		
		Now suppose that the result has been established for all words that can be obtained via sequences of length $\leq n-1$. Then $w_{n-1} \neq_G 1$ by the induction hypothesis. Using Lemma \ref{lemma:pulling} we see that $w_{n-1} = \phi_i(w_n')$, where $\phi_i = \phi_L$ if $h_i = h_L$ and $\phi_i = \phi_R$ if $h_i = h_R$, thus $w_n' \neq_G 1$. Using Lemma \ref{lemma:reduction} we see that $w_n' =_G w_n$ and hence $w_n \neq_G 1$, so $w$ indeed represents a nontrivial element of Grigorchuk's group.
	\end{proof}
	
	The next lemma establishes the completeness of our grammar.
	\begin{lem}
		\label{lemma:completeness}
		Let $\{a,b,c,d\}^*$ be arbitrary. If $w \neq_G 1$ then $w \in L$.
	\end{lem}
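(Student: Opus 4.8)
The plan is to invert the word problem algorithm and run an induction on the length of the reduced word representing $w$, mirroring the structure of Lemma \ref{lemma:consistency} but in the reverse direction. First I would observe that by Remark \ref{remark:reduction_uniqueness} it suffices to prove the statement for a reduced word $w$, since any word reduces to a unique reduced word $w'$ representing the same element, and the inverse-unreduction step (tables $u$ and $t$) allows us to recover an arbitrary word in the class of $w'$ from $w'$ itself; by Lemma \ref{lemma:reduction}, $w \longrightarrow^{u^*t} w'$ precisely when $w'$ reduces to $w$, so membership in $L$ is insensitive to applying reductions. Thus I would reduce to showing: if $w$ is reduced and $w \neq_G 1$, then $w \in L$.

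Next I would set up the induction on $|w|$. The base case handles short words, in particular those lying in $L_S$: if $w$ has an odd number of $a$'s then $w \in L_S$, and by Lemma \ref{lemma:seeding} we have $S_0 \longrightarrow^{s^*} w$, so $w \in L$ after an empty sequence of Phase 2/3 rounds. For the inductive step I would take a reduced $w \neq_G 1$ with an even number of $a$'s, so that $w$ represents an element of $G_1$ (the stabiliser of the first level). Running one step of the word problem algorithm, $w \neq_G 1$ forces $\phi_L(w) \neq_G 1$ or $\phi_R(w) \neq_G 1$; say $\phi_i(w) = w_{i-1} \neq_G 1$ for the appropriate $i \in \{L,R\}$. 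The key length estimate is the bound $|\phi_i(w)| < \tfrac{1}{2}|w| + 1 < |w|$ stated in Section \ref{sec:Grigorchuk} for reduced $w$ with $|w| > 1$; after reducing $w_{i-1}$ to its reduced form (which does not increase length and preserves the group element) the inductive hypothesis applies, giving $w_{i-1} \in L$. Then I would invert the construction: by Lemma \ref{lemma:pulling}, since $w_{i-1} =_G \phi_i(w)$ we have $w_{i-1} \longrightarrow^{p^* h_i} w$ (up to passing to a word equal to $w$ in $G$), and then Lemma \ref{lemma:reduction} lets the $u^* t$ block carry this to $w$ itself. Concatenating the derivation witnessing $w_{i-1} \in L$ with one further round $p^* \{h_L, h_R\} u^* t$ produces a derivation for $w$ consistent with the rational control $\mathcal{R} = s^* \{p^* \{h_L, h_R\} u^* t\}^*$.

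The main obstacle, and the point requiring the most care, is the interface between the group-theoretic statements ($w_{i-1} =_G \phi_i(w)$) and the purely combinatorial table derivations, which operate on literal strings rather than group elements. Lemmas \ref{lemma:pulling} and \ref{lemma:reduction} are phrased with $=_G$, so I must track exactly which string representative is produced at each stage and confirm that the $p^*$ insertions of $\delta$ and the $u^* t$ unreduction/tidy steps can realise the precise target string $w$, not merely a word equal to it in $G$; concretely, the $\delta$ placeholders account for the letters sent to $1$ by $\phi_L$ and $\phi_R$, and I must verify that every reduced $w$ arises as an image $h_i(\tilde w_{i-1})$ for some $\delta$-decorated expansion $\tilde w_{i-1}$ of $w_{i-1}$. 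A second delicate point is the induction's well-foundedness when $|w| = 1$ or when reduction of $w_{i-1}$ produces the empty word: the empty word corresponds to the trivial element and cannot occur here since $w_{i-1} \neq_G 1$, but I would state this explicitly to close the recursion cleanly. Once these bookkeeping issues are dispatched, assembling the full derivation within $\mathcal{R}$ is routine.
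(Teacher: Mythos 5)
Your proposal is correct and follows essentially the same route as the paper: both invert the word problem algorithm, seed with $L_S$ via Lemma~\ref{lemma:seeding}, and append one $p^*\{h_L,h_R\}u^*t$ block per level of the algorithm using Lemmas~\ref{lemma:pulling} and~\ref{lemma:reduction}. The only divergence is that you induct on the length of the reduced word (via the estimate $|\phi_i(w)|<|w|$) rather than on the recursion depth of the algorithm as the paper does; the paper's choice of induction variable silently disposes of the $|w|=1$ well-foundedness wrinkle you rightly flag, since termination of the algorithm is simply taken as given.
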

	\begin{proof}
		Suppose that $w \neq_G 1$. Following the algorithm described in Subsection \ref{sub:wp_algorithm} there is a sequence of words $w_0, w_0', \dots, w_n, w_n'$ and a sequence of maps $\phi_1, \dots, \phi_n \in \{\phi_L, \phi_R\}$ such that
		\begin{enumerate}
			\item $w_i$ is the reduced word obtained by reducing $w_i'$ for $i = 0, \dots, n$,
			\item $|w_i|_a$ is even for $i \geq 1$ and odd for $i = 0$,
			\item $w_{i-1}'$ is obtained from $w_i$ by applying $\phi_i$,
			\item $w_n' = w$.
		\end{enumerate}
			
		Again, we proceed by induction on $n$.
		
		Suppose that $n = 0$, i.e. $w_0' = w$. As the parity of the number of occurrences of the symbol $a$ is invariant with respect to the reduction rules, we see that $|w|_a$ is odd. It follows by Lemma \ref{lemma:seeding} that $S_0 \longrightarrow^{s^*} w$ and thus $w \in L$.
		
		Now suppose that the statement holds for all $w' \in \{a,b,c,d\}^*$ for which the word problem algorithm uses up to $n-1$ levels of recursion. In particular, 
		\begin{displaymath}
			S_0 \longrightarrow^{s^* \{p^*h_L u^*t, p^*h_R u^*t\}^*} w_{n-1}'.
		\end{displaymath}
		Using Lemma \ref{lemma:pulling} we see that $w_{n-1}' \longrightarrow^{p^*h_n} w_n$, where $h_n = h_L$ if $\phi_n = \phi_L$ and  $h_n = h_R$ if $\phi_n = \phi_R$. Similarly, using Lemma \ref{lemma:reduction} we see that $w_n \longrightarrow^{u^* t} w_n'$. Altogether we see that $w_{n-1}' \longrightarrow^{p^*h_n u^* t} w$ and therefore $w \in L$.
	\end{proof}
	
	Combining Lemma \ref{lemma:consistency} and Lemma \ref{lemma:completeness} we see that a word $w \in \{a,b,c,d\}^*$ represents a nontrivial element of Grigorchuk's group if and only if $w \in L$, which implies the main result of this section.
	\begin{thm}
		The co-word problem in Grigorchuk's group is an ETOL language.
	\end{thm}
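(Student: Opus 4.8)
The plan is to show that the language $L$ constructed in Subsection~\ref{sec:GProof} is exactly the co-word problem of $G$, namely the set of words over $\{a,b,c,d\}$ that do not represent the identity, and then invoke closure of ET0L under intersection with regular languages to conclude. Everything needed is already assembled: the rational control $\mathcal{R} = s^* \{p^* \{h_L, h_R\} u^* t\}^*$ defines an ET0L system over $\Sigma$, and after intersecting the generated language with the regular language $\{a,b,c,d\}^*$ we obtain an ET0L language $L$ in which no auxiliary symbols $S_0, S_1, \delta, \#$ survive. So the entire content of the theorem reduces to the set equality
\begin{displaymath}
	L = \{ w \in \{a,b,c,d\}^* \mid w \neq_G 1 \}.
\end{displaymath}

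First I would establish the two inclusions separately, each by induction on the number $n$ of recursion levels used in the word problem algorithm of Subsection~\ref{sub:wp_algorithm}. The forward inclusion (soundness) is precisely Lemma~\ref{lemma:consistency}: any $w \in L$ arises from a derivation $S_0 \longrightarrow^{s^*} w_0$ followed by alternating blocks $w_{i-1} \longrightarrow^{p^* h_i} w_i'$ and $w_i' \longrightarrow^{u^* t} w_i$, and the three component lemmas (Lemmas~\ref{lemma:seeding},~\ref{lemma:pulling},~\ref{lemma:reduction}) guarantee that nontriviality is preserved at every step, with the base case $w_0 \in L_S$ handled by Lemma~\ref{lemma:seeding}. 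The reverse inclusion (completeness) is Lemma~\ref{lemma:completeness}: given $w \neq_G 1$, the word problem algorithm produces a recursion tree whose branches terminate in seeds of $L_S$, and by inverting each algorithm step via the same three lemmas one reconstructs a valid derivation in the grammar, again by induction on recursion depth.

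The one point that needs a little care, and which I expect to be the genuine obstacle, is reconciling the \emph{branching} structure of the word problem algorithm with the \emph{linear} structure of an ET0L derivation. The algorithm of Subsection~\ref{sub:wp_algorithm} recurses into \emph{both} $\phi_L(w)$ and $\phi_R(w)$, so its computation is a binary tree; yet a single run of the grammar produces one word along one sequence of table applications. The resolution is that the rational control $\mathcal{R}$ permits the block $\{p^* \{h_L, h_R\} u^* t\}$ to repeat arbitrarily often, with an \emph{independent} choice of $h_L$ or $h_R$ at each repetition, so each individual branch of the algorithm's recursion tree corresponds to one choice of such a sequence; since a word is in the co-word problem as soon as \emph{some} branch reaches $L_S$, and since the grammar generates a word whenever \emph{some} legal control string does, the nondeterminism of $\mathcal{R}$ matches the existential quantifier in the statement $w \neq_G 1 \iff$ some branch lands in $L_S$. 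Making this correspondence precise is exactly what the induction on $n$ in the two lemmas accomplishes, which is why the theorem follows immediately once they are in hand.

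Consequently the proof is short: cite Lemmas~\ref{lemma:consistency} and~\ref{lemma:completeness} for the two inclusions, note that $L$ is ET0L by Definition~\ref{def:et0lasfeld} together with closure under intersection with the regular language $\{a,b,c,d\}^*$, and conclude that the co-word problem of Grigorchuk's group coincides with $L$ and is therefore ET0L. I would not expect to need any further computation beyond what the preceding lemmas already supply.
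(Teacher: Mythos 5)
Your proposal matches the paper's proof essentially verbatim: the theorem is obtained by combining Lemma~\ref{lemma:consistency} and Lemma~\ref{lemma:completeness} for the two inclusions, with ET0L-ness following from Definition~\ref{def:et0lasfeld} and closure under intersection with the regular language $\{a,b,c,d\}^*$. Your observation that a single branch of the recursion tree reaching $L_S$ certifies nontriviality (so the linear control suffices) is exactly the point the paper makes when it notes that $w \neq_G 1$ if and only if there exists one sequence $w_0,\dots,w_n$ with $w_0 \in L_S$.
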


\section{ET0L languages and $3$-manifold groups}\label{sec:BG}

The goal of this section is to prove Theorem \ref{ET0Lcombing}, which is a strengthening of Theorem B in \cite{MR1420509}, proved there for indexed instead of ET0L languages.
In 1996 Bridson and Gilman stated the theorem for all manifolds satisfying the geometrisation conjecture, but since then Perelman \cite{Perelman} proved that all compact $3$-manifolds do, so we can state the result in full generality. 

\begin{thm}\label{ET0Lcombing}
Let $M$ be a compact $3$-manifold or orbifold, and let $\mu:\Sigma^{\star} \rightarrow \pi_1 M$ be a choice of generators. Then there exists a set of normal forms $L \subseteq \Sigma^{\star}$ which satisfies the asynchronous fellow-traveler property and is an ET0L language.
\end{thm}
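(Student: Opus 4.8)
The plan is to follow the architecture of Bridson--Gilman's proof of their Theorem B in \cite{MR1420509}, but to replace their use of indexed grammars (equivalently, nested-stack automata) with ET0L systems at every stage where they build the normal form language. The geometric input is unchanged: by the geometrisation of compact $3$-manifolds, now a theorem thanks to Perelman \cite{Perelman}, the manifold $M$ decomposes along spheres and tori into pieces each of which carries one of the eight Thurston geometries (or is a Seifert-fibred or hyperbolic piece, handled by the graph-of-groups structure). Correspondingly $\pi_1 M$ is built as a fundamental group of a graph of groups whose vertex and edge groups are the fundamental groups of these geometric pieces. The asynchronous fellow-traveller property (asynchronous automaticity / combability) is exactly what Bridson--Gilman establish geometrically, and that part of the argument I would cite verbatim; the only thing I need to re-prove is that the resulting set of normal forms $L$ can be taken to be \emph{ET0L} rather than merely indexed.

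I would argue piece by piece and then reassemble. First, for each geometric piece one checks that the normal-form language produced by Bridson and Gilman's construction is ET0L: for the pieces with linear or polynomial combings (the Euclidean, $\mathrm{Nil}$, $\mathrm{Sol}$, $\widetilde{\mathrm{SL}_2}$ and $\mathbb{H}^2\times\mathbb{R}$ geometries) the normal forms are governed by finitely many arithmetic counters, and the languages that arise are of the flavour $\{a b^{i_1} a \cdots a b^{i_k} \mid 0\le i_1\le\cdots\le i_k\}$ and its relatives, which we have already seen can be captured by L systems with a small extended alphabet and rational control (cf.\ Proposition~\ref{prop:MG}); for the hyperbolic and, more generally, word-hyperbolic pieces the language of geodesic normal forms is regular, hence a fortiori ET0L. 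Second, I would show that the graph-of-groups assembly preserves ET0L. This is where the closure properties stated in Section~\ref{sec:prelim} do the work: ET0L languages form a full AFL, so they are closed under union, concatenation, Kleene star, homomorphism, inverse homomorphism and intersection with regular languages. The Bridson--Gilman normal form for the amalgam is, up to these operations, an interleaving of the normal forms of the factors controlled by a regular pattern recording which vertex group each syllable belongs to; intersecting with the regular set encoding admissible syllable sequences and applying the substitution homomorphisms keeps us inside ET0L.

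The main obstacle I expect is precisely this assembly step, not the individual pieces. The syllable structure of an element of a graph of groups is not simply a shuffle of independent factor normal forms: coset representatives for the edge groups must be chosen consistently, and Britton-type reductions impose non-local constraints coupling adjacent syllables. In the indexed setting Bridson and Gilman absorb these constraints into the stack of a nested-stack automaton, and the delicate point is that ET0L has no stack. So the crux is to verify that the coupling between syllables is in fact \emph{bounded} (finite-state), so that it can be pushed into the regular control $\mathcal R$ of an ET0L system rather than requiring unbounded memory; the asynchronous fellow-traveller property is what guarantees this boundedness, since it says the combing paths stay uniformly close and hence the transitions between factor normal forms are recognised by a finite automaton. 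Once this is established, intersecting the free ET0L shuffle of the factor languages with the regular control and applying the AFL closure operations yields an ET0L language that is still a set of normal forms and still asynchronously combable, completing the proof.
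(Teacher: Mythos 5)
Your high-level strategy is the right one (geometric decomposition plus full-AFL closure), but the proposal has a genuine gap precisely where the paper has to do new work, and it spends its effort where no new work is needed. The one piece of mathematics that must actually be supplied is an ET0L (in fact EDT0L) grammar for a combing of $\Z^2$ that lifts to $\Z^2 \rtimes \Z$ --- and this cannot be the standard regular normal forms for $\Z^2$, which do not satisfy the conditions needed for the extension. The paper uses the Bridson--Gilman crossing sequences $\kappa(m,n)$, generated by starting from $v^k$ and alternately replacing \emph{all} $v$'s by $h^i v$ and \emph{all} $h$'s by $v^j h$; this parallel-substitution description is exactly what an EDT0L system with maps $\phi_v(v)=hv$, $\phi_h(h)=vh$ and rational control captures (Theorem~\ref{Z2}), and concatenating with $\{t^*\cup(t^{-1})^*\}$ then handles $\Z^2\rtimes\Z$ (Proposition~\ref{ET0LNilSol}). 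Your proposal only gestures at ``arithmetic counters'' and cites Proposition~\ref{prop:MG}, which concerns the Grigorchuk--Mach\`i language $\{ab^{i_1}\cdots ab^{i_k}\mid i_1\le\cdots\le i_k\}$ --- a language of similar flavour but not the one required --- so the key lemma is asserted rather than proved.

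Conversely, the ``main obstacle'' you identify (Britton reductions and syllable coupling in a graph-of-groups assembly) does not arise in the paper's route. By Thurston, Epstein et al.\ and Perelman, $\pi_1 M$ is commensurable to a free product of an automatic group with finite extensions of groups of the form $\Z^2\rtimes\Z$, so the only assembly operations needed are free products and passage to finite-index over- and subgroups. Both are already established by Bridson and Gilman for an \emph{arbitrary} full AFL (Proposition~\ref{ET0Lclosure}), and ET0L is a full AFL, so that step is immediate with no new boundedness argument required. If you were to carry out a genuine graph-of-groups assembly from scratch you would indeed have to confront the issues you raise, but the structural reduction makes that unnecessary; redirect your effort to writing down the explicit grammar for the $\Z^2$ crossing sequences.
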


We follow in the footsteps of Bridson and Gilman, whose proof relies on (1) closure properties of AFL languages, and (2) showing that an appropriate set of normal forms for $\Z^2$ is ET0L. Note that standard regular normal forms for $\Z^2$ will not produce an appropriate language of normal forms for the extension $\Z^2 \rtimes \Z$, which needs to satisfy the properties detailed in \cite[page 541]{MR1420509}.

For the sake of completeness we recall the relevant results on AFL languages.
As in the paper of Bridson and Gilman (and much of the literature), we will call a set of normal forms satisfying the asynchronous fellow-traveler property a \emph{combing}.

\begin{prop}[\cite{MR1420509}] \label{ET0Lclosure}Let $\mathcal{A}$ be a full AFL class of languages (such as regular, context-free, indexed, or ET0L). 
\begin{enumerate}
\item (Prop. 2.9) If $G_1$ and $G_2$ both have an asynchronous $\mathcal{A}$-combing, then so does the free product $G_1 \ast G_2$.
\item (Theorem 2.16) Let $G$ be a finitely generated group, and $H$ a subgroup of finite index. Then $G$ admits an asynchronous $\mathcal{A}$-combing if and only if $H$ admits an asynchronous $\mathcal{A}$-combing.
\end{enumerate}
\end{prop}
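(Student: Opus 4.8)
The plan is to prove the two parts separately, in each case building the desired combing as a regular (AFL) combination of the given combings and then verifying the asynchronous fellow-traveler property geometrically. Although the statement is attributed to \cite{MR1420509}, I sketch a self-contained argument using only the defining closure properties of a full AFL $\mathcal{A}$: closure under homomorphism, inverse homomorphism, intersection with regular languages, finite union, concatenation, and Kleene star (and hence, by the Nivat-style decomposition $\tau(L)=h_2(h_1^{-1}(L)\cap R)$, closure under arbitrary rational transductions).

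For part (1), fix asynchronous $\mathcal{A}$-combings $L_1\subseteq\Sigma_1^\ast$ and $L_2\subseteq\Sigma_2^\ast$ of $G_1,G_2$, which I may assume are sections (one word per element). Let $\hat L_i = L_i\cap(\Sigma_i^\ast\setminus F_i)$, where $F_i$ is the finite set of words representing the identity; since the complement of a finite set is regular, $\hat L_i\in\mathcal{A}$ and $\hat L_i$ names exactly the nontrivial elements of $G_i$. By the normal-form theorem for free products every element of $G_1\ast G_2$ is an alternating product of nontrivial syllables, so I take $L=(\hat L_1\hat L_2)^\ast(\hat L_1\cup\{\eps\})\cup(\hat L_2\hat L_1)^\ast(\hat L_2\cup\{\eps\})$ over $\Sigma_1\sqcup\Sigma_2$. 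This lies in $\mathcal{A}$ by closure under union, concatenation and Kleene star, and it surjects onto $G_1\ast G_2$. For the fellow-traveler property I use that the Cayley graph of a free product is the tree-like amalgam of the factor Cayley graphs: right-multiplying $g=h_1\cdots h_k$ by a generator only alters the last syllable or appends a new one, so the combing lines for $g$ and $gs$ agree on the common prefix $u_1\cdots u_{k-1}$ and then remain uniformly close by the factor fellow-traveler property.

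For part (2), the direction $H\Rightarrow G$ is routine. Fix finite generating sets $\Sigma_H,\Sigma_G$, a transversal $t_1=1,\dots,t_n$ for $H\backslash G$ with chosen words $r_i\in\Sigma_G^\ast$, and the substitution homomorphism $\psi\colon\Sigma_H^\ast\to\Sigma_G^\ast$ realising each $H$-generator as a $G$-word. Given an $\mathcal{A}$-combing $L_H$ of $H$, set $L_G=\bigcup_{i=1}^n\psi(L_H)\,r_i$; this is in $\mathcal{A}$ by closure under homomorphism, finite union, and concatenation with the regular singletons $\{r_i\}$, and it surjects onto $G$ because every $g$ is uniquely $h t_i$. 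The fellow-traveler property transfers since $H$ is quasi-isometrically embedded in $G$ (finite index) and the appended suffixes have bounded length.

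The substantive direction is $G\Rightarrow H$, and this is where I expect the main difficulty. Given an $\mathcal{A}$-combing $L_G\subseteq\Sigma_G^\ast$ of $G$, the coset action $G\to\mathrm{Sym}(H\backslash G)$ is read by a finite automaton (the Schreier graph) tracking the coset of each prefix, so the set $R\subseteq\Sigma_G^\ast$ of words representing elements of $H$ is regular and $L_G\cap R\in\mathcal{A}$ is a set of $\Sigma_G$-words evaluating exactly into $H$ and surjecting onto $H$. It remains to convert these into words over a generating set of $H$: using the Schreier generators $S=\{t_i x\,\overline{t_i x}^{-1} : x\in\Sigma_G,\ 1\le i\le n\}$, the Reidemeister-Schreier rewriting reads a $\Sigma_G$-word along its coset path and outputs the corresponding $S$-word, and this rewriting is a finite transducer, i.e.\ a rational transduction, under which $\mathcal{A}$ is closed; applying it to $L_G\cap R$ yields an $\mathcal{A}$-language over $S$ surjecting onto $H$. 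The crux here, and the reason the hypothesis must be a \emph{full} AFL rather than merely EDT0L, is precisely this appeal to inverse-homomorphism/rational-transduction closure, which EDT0L lacks. Finally I would check that the asynchronous fellow-traveler property survives: because the transducer changes word lengths only by bounded factors and $H\hookrightarrow G$ is a quasi-isometric embedding, the resulting combing lines stay uniformly close in the Cayley graph of $H$. This geometric bookkeeping, rather than the language-class membership, is the part requiring the most care.
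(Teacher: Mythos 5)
A preliminary point: the paper itself contains \emph{no} proof of this proposition --- it is quoted verbatim from Bridson--Gilman \cite{MR1420509} (their Proposition 2.9 and Theorem 2.16), and the citation is the entirety of the paper's justification. So your attempt can only be judged on its own merits and against the original arguments. With that said, your part (2) is essentially the standard (and correct) route: the Schreier coset automaton shows that the full preimage of $H$ in $\Sigma_G^\ast$ is a regular language $R$, so $L_G\cap R\in\mathcal{A}$; Reidemeister--Schreier rewriting along the coset path is a rational transduction, under which any full AFL (indeed any full trio) is closed by Nivat's theorem; and asynchronous fellow-traveling transfers through the coarse retraction $g\mapsto g\,\overline{g}^{-1}$, which is a quasi-isometry onto $H$ when the index is finite. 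You also correctly identify that this inverse-homomorphism/transduction step is precisely why the hypothesis must be a \emph{full} AFL and why the argument is unavailable for EDT0L. The deferred ``geometric bookkeeping'' (iterating the fellow-traveler property over the boundedly many generator steps making up a Schreier generator, and pushing asynchronous closeness through a quasi-isometry) is routine.

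Part (1), however, contains a genuine gap: the opening reduction ``which I may assume are sections (one word per element).'' In Bridson--Gilman's definition (and in this paper's usage, where ``set of normal forms'' means only that $\mu(L)=G$), a combing is \emph{not} required to be bijective, and for a general full AFL $\mathcal{A}$ there is no known way to extract a bijective sub-combing that stays in $\mathcal{A}$: the uniqueness refinement you may have in mind from (asynchronously) automatic groups uses specifically regular-language tools (determinization, shortlex selection) that do not generalize to, say, ET0L or indexed languages. Your construction genuinely uses the assumption: without uniqueness, $\hat L_i = L_i\setminus\{\text{words representing }1\}$ is the intersection of $L_i$ with the \emph{complement of the word problem} of $G_i$, which is not regular for infinite $G_i$, so you cannot conclude $\hat L_i\in\mathcal{A}$, and the whole language $L$ collapses. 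The gap is repairable without sections: allow identity syllables, take a language such as $(L_1L_2)^\ast\bigl(L_1\cup\{\epsilon\}\bigr)$ (plus the symmetric piece), which lies in $\mathcal{A}$ by concatenation, star and union, and surjects onto $G_1\ast G_2$; then observe that in any asynchronous combing all representatives of the identity pairwise fellow-travel one another and some fixed representative, hence stay within a uniformly bounded distance of their basepoint, so padding by identity syllables does not destroy the fellow-traveler property. As written, though, the first step of your part (1) does not go through, and this is exactly the point where the free-product case requires a real idea rather than bookkeeping.
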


We first recall the crossing sequence $\kappa(m,n)$ of Bridson and Gilman, which gives the EDT0L normal form for $\Z^2$. Let $(m,n) \in \Z^2$ have $m>0, n\geq 0$, and consider the line $l(m,n^+)$ in the plane from $(0,0)$ to $(m,n^+)$, where $n^+$ is chosen slightly larger than $n$, but small enough to ensure that (1) $l(m,n^+)$ does not contain any lattice points except $(0,0)$, and (2) there are no lattice points in the interior of the triangle with vertices $(0,0)$, $(m,n)$ and $(m,n^+)$. For the line $l(m,n^+)$, the sequence formed by recording an $h$ each time a horizontal line in the plane is crossed and a $v$ each time a vertical line is crossed is called the crossing sequence $\kappa(m,n)$. For example, $\kappa(2,3)=hvhhv$.

\begin{thm} \label{Z2}
The set $L=\{\kappa(m,n) \mid m>0, n\geq 0\}$ is an EDT0L language. That is, the indexed combing for $\Z^2$ in \cite{MR1420509} is in fact EDT0L.
\end{thm}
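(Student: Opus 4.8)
The plan is to exhibit an explicit EDT0L system in the sense of Definition~\ref{def:edt0lasfeld} and prove it generates exactly $L$. The combinatorial heart is a pair of substitution identities. Define two endomorphisms of $\{h,v\}^*$ by
\[ U\colon\ v\mapsto hv,\ \ h\mapsto h, \qquad\qquad D\colon\ v\mapsto v,\ \ h\mapsto vh. \]
The first thing I would establish is that for all $m\geq 1$ and $n\geq 0$,
\[ \kappa(m,\,n+m)=U\bigl(\kappa(m,n)\bigr), \qquad \kappa(m+n,\,n)=D\bigl(\kappa(m,n)\bigr), \]
with base values $\kappa(m,0)=v^{m}$. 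The clean way to see this is to recall that $\kappa(m,n)$ is exactly the merge, in increasing order of time, of the vertical crossing times $\{\,i/m : 1\le i\le m\,\}$ (each labelled $v$) with the horizontal crossing times $\{\,j/n^{+} : 1\le j\le n\,\}$ (each labelled $h$), the perturbation $n^{+}$ serving only to break ties in favour of $h$. Passing from slope $n/m$ to slope $(n+m)/m$ keeps the $v$-times fixed but replaces the $h$-times by $\{\,j/(n+m)^{+}\,\}$; a Beatty/floor count then shows that in each interval $\bigl((i-1)/m,\ i/m\bigr]$ exactly one additional horizontal crossing appears, sitting immediately before the vertical crossing at $i/m$, which is precisely the rule $v\mapsto hv$. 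The identity for $D$ is the transpose statement for the slope change $n/m\mapsto n/(m+n)$. I expect this interleaving count, and in particular checking that the perturbation convention is transported consistently so that ties are resolved the same way before and after, to be the main obstacle.

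Granting the identities, the reachability step is routine. Running the subtractive Euclidean algorithm on $(m,n)$ reduces any pair with $m\geq1,\ n\geq0$ to $(\gcd(m,n),0)$, and reading the steps backwards expresses $\kappa(m,n)=r\bigl(v^{\gcd(m,n)}\bigr)$ for some $r\in\{U,D\}^{*}$, where I use that $U,D$ are homomorphisms, so $r(v^{d})=\bigl(r(v)\bigr)^{d}$. Conversely, every $r\in\{U,D\}^{*}$ applied to any $v^{d}$ yields a crossing sequence by the identities above. Hence
\[ L=\bigl\{\,r(v^{d})\ \big|\ d\geq1,\ r\in\{U,D\}^{*}\,\bigr\}. \]

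Finally I would assemble the EDT0L system over the extended alphabet $C=\{S,h,v\}$ with start symbol $S$, terminal alphabet $\{h,v\}$, and four (deterministic) tables: $g\colon S\mapsto vS$, $k\colon S\mapsto v$, together with $U$ and $D$ as above, each fixing every letter not displayed. With rational control $\mathcal{R}=g^{*}\,k\,\{U,D\}^{*}$, the block $g^{*}k$ rewrites $S$ to precisely the words $v^{d}$ with $d\geq1$ and leaves no occurrence of $S$, after which $\{U,D\}^{*}$ acts on $\{h,v\}^{*}$. Thus $S\longrightarrow^{r}w$ for some $r\in\mathcal{R}$ if and only if $w=r'(v^{d})$ for some $d\geq1$ and $r'\in\{U,D\}^{*}$, so by the displayed description $L(H)=L$; moreover every word reachable through $\mathcal{R}$ already lies in $\{h,v\}^{*}$, so no intersection with a regular language is needed. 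Since all tables are homomorphisms, this refines the indexed combing of \cite{MR1420509} to an EDT0L one, proving the theorem.
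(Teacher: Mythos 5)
Your construction is essentially identical to the paper's: the same extended alphabet and start symbol (up to renaming), the same two homomorphisms $v\mapsto hv$ and $h\mapsto vh$ (called $\phi_v,\phi_h$ there), and the same rational control that first produces $v^{d}$ and then applies an arbitrary word in those two maps. The only difference is that you prove the substitution identities and the Euclidean-algorithm reachability in detail, whereas the paper cites this description of $L$ from \cite{MR1420509}; your argument is correct.
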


\begin{proof}
The proof only focuses on the first quadrant in $\Z^2$, and it can be easily extended to all of $\Z^2$. As is described in \cite{MR1420509}, $L$ can be generated by starting with an arbitrary $v^k$, $k>0$, and alternately replacing all $v$'s by $h^i v$ and all $h$'s by $v^j h$. The EDT0L grammar thus first has to produce $v^k$, and then apply maps which mimic the morphisms described. Let $\{q, v, h\}$ be the extended alphabet, with $q$ the start symbol. Let $\phi_q$, $\phi_v$, $\phi_h$ and $\phi_s$ be the maps defined by $\phi_q(q)=qv$, $\phi_v(v)=hv$, $\phi_h(h)=vh$ and $\phi_s(q)=v$.

Then $\phi_s\phi_q^{k-1}(q)=v^k$ generates the starting point of the crossing sequence, and then we apply any map in $\{\phi_v, \phi_h\}^{\star}$ to $v^k$ and obtain the set $L$. Thus by Definition \ref{def:edt0lasfeld} the set $L$ is an EDT0L language.
 \end{proof}

We remark that the EDT0L characterisation for the $\Z^2$ combing cannot be lifted to $\Z^2 \rtimes \Z$ and other more general groups, because EDT0L languages do not form a full AFL, which is essential in several proofs in \cite{MR1420509}.

\begin{prop}\label{ET0LNilSol}
Every semidirect product of the form $\Z^2 \rtimes \Z$ admits an asynchronous ET0L combing.
\end{prop}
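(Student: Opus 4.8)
The plan is to prove Proposition~\ref{ET0LNilSol} by combining the EDT0L combing for $\Z^2$ from Theorem~\ref{Z2} with the closure properties of the full AFL class of ET0L languages, exactly as Bridson and Gilman do for the indexed case. A group of the form $\Z^2 \rtimes_\theta \Z$ is determined by a matrix $\theta \in GL_2(\Z)$ giving the action of the generator of $\Z$ on the normal $\Z^2$. The strategy is to write down an explicit set of normal forms: a group element is expressed as $t^{-p}\, u\, t^{q}$ or more simply as $w\, t^{m}$, where $w$ encodes an element of $\Z^2$ via the crossing-sequence normal form $\kappa$ from Theorem~\ref{Z2} and $t^m$ records the $\Z$-coordinate. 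The key point is that to put a word into this form one must push all occurrences of the stable letter $t$ to one side, and each such move rewrites a $\Z^2$-exponent vector $(m,n)$ by applying a power of $\theta$. So the combing words must interleave blocks of $\kappa$-type data with separating $t$-symbols.

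First I would fix generators $\Sigma = \{t^{\pm 1}\} \cup \{x^{\pm 1}, y^{\pm 1}\}$ for the $\Z^2$ factor, and describe the target normal form as a concatenation of finitely many crossing-sequence blocks separated by the stable letter, reflecting the Britton-type normal form for the HNN/semidirect structure. Since ET0L (unlike EDT0L) is a full AFL, it is closed under concatenation, union, homomorphism, inverse homomorphism, and intersection with regular languages. I would exhibit the combing language as built from the EDT0L language $L$ of Theorem~\ref{Z2} using precisely these operations: the $\kappa$-blocks are (homomorphic or regular-controlled) copies of $L$, and gluing them together with powers of $t$ is a concatenation, so the whole set lands in ET0L. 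The asynchronous fellow-traveler property then has to be verified geometrically: two normal forms for elements at bounded distance in the Cayley graph must track each other up to reparametrisation. This is where I would lean directly on the argument in \cite[pages 540--541]{MR1420509}, since the combing we produce is word-for-word the same set they show to be an asynchronous combing; only the formal-language class of the $\Z^2$ piece has been improved from indexed to EDT0L, and closure of ET0L under the relevant operations preserves the asynchronous property at the level of the whole group.

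The main obstacle I anticipate is not the fellow-traveler property, which transfers from Bridson--Gilman, but rather ensuring that the language stays within ET0L after the action of $\theta$ is incorporated. The difficulty is that the exponents in successive $\Z^2$-blocks are related by applying $\theta$, and a single EDT0L/ET0L derivation must generate all the blocks in parallel; one cannot simply concatenate independent copies of $L$ if their sizes are linked by the matrix action. The resolution is that the set of normal forms does not actually require the blocks to be numerically coupled across the whole word --- the normal form records the crossing sequence of each intermediate $\Z^2$-element independently, and the constraint that these arise from a genuine group element is absorbed into the (regular) bookkeeping on the $t$-letters together with intersection with a regular language. Thus the coupling is handled by regular control and intersection rather than by the parallel-rewriting mechanism, keeping us inside the AFL and hence inside ET0L. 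The second, milder, obstacle is extending the first-quadrant analysis of Theorem~\ref{Z2} to all four quadrants and handling the orientation-reversing or hyperbolic cases of $\theta$, but these amount to finitely many sign bookkeeping cases, each a routine homomorphic relabelling that ET0L absorbs.

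Finally, I would remark that this is the precise step where the distinction between ET0L and EDT0L becomes essential: the construction uses inverse homomorphism and the full AFL closure, which EDT0L does not enjoy, so the proposition genuinely asserts ET0L rather than EDT0L, consistent with the remark following Theorem~\ref{Z2} that the $\Z^2$ result cannot be lifted to $\Z^2 \rtimes \Z$ while staying EDT0L. With the combing language realised as an ET0L language and the asynchronous fellow-traveler property inherited from \cite{MR1420509}, the proposition follows.
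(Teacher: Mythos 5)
Your overall strategy --- feed the EDT0L language $L$ of Theorem~\ref{Z2} into Bridson--Gilman's Theorem~3.1 and use AFL-type closure to stay inside ET0L, while inheriting the fellow-traveler property from \cite{MR1420509} --- is the right one, and it is what the paper does. But the normal form you actually build is not the one in \cite{MR1420509}, and the detour causes a genuine gap. Because $\Z^2$ is normal in $\Z^2\rtimes\Z$, every element is $t^m v$ with $v\in\Z^2$, and the combing of \cite[Theorem 3.1]{MR1420509} is simply $L_0=\{t^*\cup(t^{-1})^*\}L$: a single power of the stable letter followed by a \emph{single} crossing-sequence block. There is no Britton-type interleaving of several $\kappa$-blocks separated by $t$'s, so the ``coupling of successive blocks by powers of $\theta$'' that you identify as the main obstacle simply does not arise. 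Worse, the resolution you offer for that (self-inflicted) obstacle does not work: the constraint that consecutive exponent vectors satisfy $(m',n')=\theta(m,n)$, with $(m,n)$ encoded in a crossing sequence whose length grows with $|m|+|n|$, is a numerical relation between unbounded quantities and cannot be ``absorbed into regular bookkeeping on the $t$-letters together with intersection with a regular language.'' Regular control cannot compare the sizes of adjacent blocks. The correct fix is not better bookkeeping but the observation that only one block is needed; with $L_0=\{t^*\cup(t^{-1})^*\}L$, membership in ET0L is immediate from closure under concatenation with a regular language and finite union, and the asynchronous fellow-traveler property is exactly the content of \cite[Theorem 3.1]{MR1420509} (this is where the action of $\theta$ is dealt with, geometrically, using the fact that crossing-sequence normal forms stay close to straight line segments).

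A secondary correction: your closing remark that \emph{this} is the step where full-AFL closure (in particular inverse homomorphism) is essential is not accurate. The proof of this proposition uses only concatenation with a regular language and finite union, operations under which EDT0L is also closed; the full-AFL property of ET0L is needed later, in Proposition~\ref{ET0Lclosure}, for free products and passage to and from finite-index subgroups, which is why the final Theorem~\ref{ET0Lcombing} is stated for ET0L rather than EDT0L.
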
 
\begin{proof}
The proof is exactly as that of Corollary 3.5 in \cite{MR1420509}. More precisely, let $t$ be a generator of $\Z$ and $L$ be the language of normal forms for $\Z^2$ from Theorem \ref{Z2}. By \cite[Theorem 3.1]{MR1420509}, the language $L_0=\{t^* \cup (t^{-1})^*\}L$ is an asynchronous combing of $\Z^2 \rtimes \Z$, and since ET0L languages are closed under concatenation with a regular language and finite unions, we get that $L_0$ is ET0L.
\end{proof}

\begin{proof} (of Theorem \ref{ET0Lcombing})
The work of Thurston, Epstein and Perelman implies that any $\pi_1 M$ as in the hypothesis is commensurable to the free product of an automatic group and (possibly) finite extensions of groups of the form $\Z^2 \rtimes \Z$. Thus the proof follows immediately from Propositions \ref{ET0Lclosure} and \ref{ET0LNilSol}.
\end{proof}

\section{Open problems}

Among the formal languages naturally appearing in group theory none is more prominent than the word problem, that is, the set of words representing the trivial element in a finitely generated group.
Since EDT0L languages are not closed under inverse homomorphism, {\em a priori} a group may have EDT0L word problem for one finite generating set but not for another. However, we do not know of any infinite group which has EDT0L word  or co-word problem  for some finite generating set. Since EDT0L languages are relatively close in complexity to context-free languages, one might wonder whether the groups with context-free word problem have EDT0L word problem. A first negative answer is given below.

\begin{prop}\label{prop:free2notEdt0l}
	Let $F$ be the free group of rank at least two. Then the word problem in $F$ is not EDT0L.
\end{prop}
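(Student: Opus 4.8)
The plan is to show that the word problem $W$ of a free group $F$ of rank $\geq 2$ is not EDT0L by exhibiting a structural property that every EDT0L language must satisfy but $W$ does not. The most natural obstruction to exploit is the growth of the language, or a pumping-type combinatorial rigidity of EDT0L languages. I would build on the observation, already flagged in the excerpt just before the statement, that the word problem of $F_2$ is intimately tied to the Dyck language: the excerpt recalls (via \cite{ehrenfeucht1974some}) that the Dyck language on at least $8$ letters is context-free but not EDT0L. Since $W$ contains, and is built from, a two-sided Dyck-type structure, the idea is to reduce the non-EDT0L-ness of $W$ to the non-EDT0L-ness of a Dyck language through the closure properties of EDT0L languages.

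\textbf{Step 1.} First I would recall that EDT0L languages are closed under intersection with regular languages (stated in the excerpt, since EDT0L is an AFL minus inverse homomorphism). Thus if $W$ were EDT0L, so would be $W \cap R$ for any regular language $R$.

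\textbf{Step 2.} Next I would choose a regular language $R$ that isolates, inside $W$, a copy of a Dyck language on sufficiently many letters. Concretely, in $F_2 = F(a,b)$ one can encode a Dyck language on $k$ bracket pairs by a regular constraint that forces a word in $W$ to read as a balanced sequence whose only cancellations are nested, recovering the Dyck structure on $2k$ symbols. By taking the rank high enough (or by padding with extra free generators, using that $F_2$ embeds in $F$ and any $F_n$ embeds in $F_2$), I would arrange for this intersection to be, up to a letter-to-letter renaming, the Dyck language on at least $8$ letters. Since EDT0L is closed under such renamings (these are length-preserving homomorphisms, hence homomorphisms), and the Dyck language on $\geq 8$ letters is \emph{not} EDT0L, we obtain a contradiction.

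\textbf{The hard part} will be Step 2: setting up the regular language and the renaming so that the intersection is \emph{exactly} a Dyck language known not to be EDT0L, rather than merely Dyck-like. The subtlety is that a word in $W$ represents the identity in $F$, and freely reduced cancellations can occur in more than a strictly nested pattern, so the regular constraint must be chosen carefully to forbid the non-nested cancellations while still retaining enough of the language to recover genuinely all of a Dyck language. An alternative route, which may be cleaner and which I would keep in reserve, is to invoke the mirror-image construction of the Theorem attributed to \cite{ehrenfeucht1976relationship}: that theorem produces from any context-free non-EDT0L language $K$ an indexed non-ET0L language $M_K = \{k\,\overline{h(k)}\}$, and words of the form $w\,\overline{w}^{-1}$ sit naturally inside the word problem of a free group. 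Matching $W$ against that machinery, again via intersection with a regular language, would directly transport the non-EDT0L (indeed non-ET0L) conclusion. Either way, the essential input is the known failure of the Dyck language to be EDT0L together with the AFL closure properties recorded earlier in the paper.
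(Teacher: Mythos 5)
Your high-level instinct --- reduce to a known non-EDT0L context-free language using the closure of EDT0L under homomorphism and intersection with regular languages --- is exactly right, and it is the engine of the paper's proof. But the concrete mechanism in your Step 2 has a genuine gap, and it is not merely the technical difficulty you flag. First, for the crucial case $F_2$ the alphabet has only four symbols, so no letter-to-letter renaming can produce a Dyck language on eight or more letters; and the padding trick via embeddings goes the wrong way, since pulling the word problem back along an embedding $F_n\hookrightarrow F_2$ is an \emph{inverse} homomorphism, under which EDT0L is not closed (as the paper notes), while the containment $W(F_2)=W(F_n)\cap\{a^{\pm1},b^{\pm1}\}^*$ only transports non-EDT0L-ness \emph{from} rank $2$ \emph{to} higher rank, not the other way. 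Second, even with matching alphabets, the word problem is the \emph{symmetric} (two-sided) Dyck language, and carving the one-sided Dyck language out of it by intersecting with a regular set amounts to enforcing the ``no negative excursion'' condition with finite memory relative to the set of balanced words; this is not achievable by a regular constraint, so the intersection will never be exactly the Dyck language you need. Your reserve route via $M_K=\{k\overline{h(k)}\}$ fares no better: restricting the palindromic words $w\,\overline{w^{-1}}\in W(F)$ to those with $w$ in a fixed context-free non-EDT0L language $K$ is again not a regular constraint.

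The missing ingredient is to allow an arbitrary (in particular erasing) homomorphism \emph{after} the regular intersection. The paper does exactly this by citing Latteux: a language $L$ that is a \emph{context-free generator}, i.e.\ one for which every context-free $K$ equals $h_K(L\cap R_K)$ for some regular $R_K$ and homomorphism $h_K$, cannot be EDT0L --- precisely because EDT0L is closed under homomorphism and regular intersection while some context-free language (the Dyck language on enough letters) is not EDT0L. The Chomsky--Sch\"utzenberger representation theorem then supplies the fact that the symmetric Dyck language on $n\geq 2$ letters, which is literally the word problem of $F_n$, is such a generator; no explicit $R$ or $h$ needs to be constructed. So your argument can be repaired, but only by replacing the letter-to-letter renaming with the full generator property; as written, Step 2 does not go through.
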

\begin{proof}
	It was proved in \cite{Latteux} that if a language $L$ is a context-free generator, i.e. for every context-free language $K$ there is a regular language $R_K$ and a homomorphism $h_K$ such that $K = h_K(L \cap R_K)$, then $L$ is not EDT0L. It follows by the Chomsky-Sch\"utzenberger representation theorem \cite{chomsky1963algebraic} that every Dyck language on at least two letters is a context-free generator. It can be easily seen that the word problem in $F_n$, the free group on $n$ generators, is isomorphic to $D^{\star}_n$, the symmetric Dyck Language on $n$ letters. It follows that if $n > 1$ then the word problem in $F_n$ is not EDT0L.
\end{proof}
 
 \begin{question} Is the word problem for $\Z$ (for some or every finite generating set)  EDT0L?
 \end{question}
 
A related open problem is to determine the class of groups having ET0L word problem; a well known problem is to find a non-virtually free group with indexed word problem, so a reasonable conjecture here is that the only groups with ET0L word problem are virtually free.
 
\begin{conjecture}
A group has EDT0L word problem if and only if it is finite. A group has ET0L word problem if and only if it is virtually free.
\end{conjecture}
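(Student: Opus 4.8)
The conjecture comprises four implications, of which two are consequences of results already at hand and two are the genuine content; the plan is to dispose of the former and reduce the latter to sharp non-membership statements. The two ``if'' directions settle immediately. If $G$ is finite then, for any finite generating set, a finite-state automaton decides whether a word represents the identity, so the word problem is regular; as every regular language is EDT0L, $G$ has EDT0L word problem. If $G$ is virtually free then, by the theorem of Muller and Schupp, its word problem is context-free, and since context-free $\subsetneq$ ET0L by the inclusions recorded in Section~\ref{sec:prelim}, $G$ has ET0L word problem. Thus all the difficulty sits in the ``only if'' directions: that every infinite group fails to have EDT0L word problem, and that every non-virtually-free group fails to have ET0L word problem. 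One caveat colours the whole EDT0L half: since EDT0L is not closed under inverse homomorphism, the statement must be read and proved generating-set by generating-set.

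For the EDT0L direction I would first handle groups whose word problem is a context-free generator by the mechanism of Proposition~\ref{prop:free2notEdt0l}, since Latteux's theorem \cite{Latteux} forbids such languages from being EDT0L; this already covers the free groups of rank at least two. The decisive reduction for all groups containing an infinite-order element is the following: EDT0L is closed under intersection with regular languages, so intersecting the word problem of $G$, taken with respect to a generating set containing an element $x$ of infinite order, with the regular language $\{x,x^{-1}\}^{\ast}$ yields exactly $L_\Z=\{w\in\{x,x^{-1}\}^{\ast}\mid |w|_x=|w|_{x^{-1}}\}$. Hence a single theorem showing $L_\Z$ is not EDT0L would exclude all such $G$ at once, leaving only the infinite torsion groups (such as Grigorchuk's group) as a genuinely separate case. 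I expect the base case $\Z$ to be the principal obstacle: no known EDT0L pumping or shrinking lemma in the spirit of Lemma~\ref{lem:shrink} is strong enough to reject the rigid counting structure of $L_\Z$, which is precisely why the paper states it as an open question.

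For the ET0L direction the cleanest route is to prove the structural statement that an ET0L word problem is automatically context-free and then quote Muller and Schupp, which would collapse the classes of ET0L and context-free word problems onto one another. Failing such a collapse, a second route is to prove the stronger folklore statement that an indexed word problem already forces virtual freeness, whence the claim follows from the strict inclusion ET0L $\subsetneq$ indexed; the natural first test case here is $\Z^2$, whose word problem $\{w\mid |w|_a=|w|_{a^{-1}},\ |w|_b=|w|_{b^{-1}}\}$ one would attempt to separate from ET0L using the non-ET0L and non-indexed techniques surveyed in Section~\ref{sec:nonET0L}. A third route exploits that, because ET0L \emph{is} a full AFL, the class of groups with ET0L word problem is closed under commensurability, free products and finitely generated subgroups; one would then transport the geometric heart of Muller--Schupp---the analysis of the ends of the Cayley graph together with Stallings' structure theorem and Dunwoody's accessibility---into a combinatorial characterisation of ET0L word problems, with an ET0L pumping lemma replacing the context-free one in the induction.

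The overarching obstacle is that no such geometric or automaton-theoretic characterisation of ET0L word problems is currently available, and the indexed analogue of the second route is itself a well-known open problem. I therefore regard the ET0L half of the conjecture as the deeper of the two, contingent on new machinery rather than on a refinement of the tools developed here, while the EDT0L half hinges almost entirely on the single question of whether $L_\Z$ is EDT0L, together with a separate treatment of infinite torsion groups.
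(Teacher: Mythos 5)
This statement is a \emph{conjecture}: the paper offers no proof of it, and the text surrounding it makes clear that both ``only if'' directions are open (the paper explicitly says it knows of no infinite group with EDT0L word problem, poses as a Question whether even $\Z$ has EDT0L word problem, and motivates the ET0L half by the ``well known problem'' of finding a non-virtually-free group with indexed word problem). So there is no paper proof to compare against, and your proposal should be judged on whether it is honest and sound about what it actually establishes --- which it is. Your two ``if'' directions are correct: finite groups have regular word problem and regular languages are EDT0L; virtually free groups have context-free word problem by Muller--Schupp and context-free $\subsetneq$ ET0L. Your reduction of the EDT0L ``only if'' direction, via closure of EDT0L under intersection with regular languages, to the single question of whether $L_\Z=\{w\in\{x,x^{-1}\}^*\mid |w|_x=|w|_{x^{-1}}\}$ is EDT0L is sound (for generating sets containing an infinite-order element) and is exactly the paper's own open Question about $\Z$; you also correctly isolate infinite torsion groups --- of which Grigorchuk's group, central to Section~\ref{sec:Grigorchuk}, is an example --- as a case this reduction cannot touch. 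Your second route for the ET0L half (indexed word problem $\Rightarrow$ virtually free, plus ET0L $\subsetneq$ indexed) is precisely the heuristic the paper itself offers as motivation, and is likewise open.

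Two small cautions on the speculative parts. First, your third ET0L route asserts that the class of groups with ET0L word problem is closed under free products; the closure result the paper actually proves (Proposition~\ref{ET0Lclosure}) concerns asynchronous combings, not word problems, so this closure would itself need an argument (finite-index and finitely generated subgroup closure do follow from full-AFL properties, but free products are less immediate). Second, under the ``for some generating set'' reading of the conjecture, your $L_\Z$ reduction rules out only those generating sets that contain an infinite-order element; to exclude an infinite group entirely one must handle arbitrary generating sets, where the intersection trick yields languages like $\{u^n(\bar u)^n\}$ that \emph{are} EDT0L, so the reduction genuinely needs the element to be a single letter. Neither caution is an error in your write-up --- you flag both the generating-set caveat and the contingency on new machinery --- but they mark the exact points where the open problems sit.
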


\section*{Acknowledgments}
We would like to thank Sylvain Salvati for the EDT0L grammar used in the proof of Proposition~\ref{prop:MG}
and Michel Latteux for  the outline for Proposition~\ref{prop:free2notEdt0l}. We also thank Claas R\"over for interesting discussions. Further, we would like to thank the anonymous referee for suggesting several simplifications of the submitted manuscript.

The first two authors were partially supported by the Swiss National Science Foundation grant Professorship FN PP00P2-144681/1, and by a Follow-On grant of the International Centre of Mathematical Sciences in Edinburgh. All authors were supported by the Australian Research Council Discovery Project grant  DP160100486.

\bibliography{refs} \bibliographystyle{plain}

\end{document}